\documentclass[11pt]{amsart}

\usepackage{eurosym,graphicx,amscd,color,amsmath,amsfonts,amssymb,amsthm,centernot,geometry,hyperref,mathtools,tikz}
\usepackage[initials]{amsrefs}
\usepackage[norefs,nocites]{refcheck}

\usetikzlibrary{arrows.meta, positioning, decorations.pathreplacing}

\newtheorem{theorem}{Theorem}[section]

\newtheorem{corollary}[theorem]{Corollary}
\newtheorem{definition}[theorem]{Definition}
\newtheorem{example}[theorem]{Example}
\newtheorem{lemma}[theorem]{Lemma}
\newtheorem{proposition}[theorem]{Proposition}
\newtheorem{remark}[theorem]{Remark}

\newtheorem*{SQP}{Separable Quotient Problem (SQP)}

\begin{document}
\title{The Separable Quotient Problem and the Lotz Property in Banach Spaces}

\author[Ara\'{u}jo]{Gustavo Ara\'{u}jo$^{*}$}
\address[G. Ara\'{u}jo]{\mbox{}\newline\indent Departamento de Matem\'{a}tica \newline\indent
Universidade Estadual da Para\'{i}ba \newline\indent
Campina Grande - PB \newline\indent
58.429-500 (Brazil) \newline\indent
and \newline\indent
Instituto de Matemática Interdisciplinar (IMI) \newline\indent
Departamento de Análisis Matemático y Matemática Aplicada \newline\indent
Facultad de Ciencias Matemáticas \newline\indent
Universidad Complutense de Madrid \newline\indent
Madrid \newline\indent
28040 (Spain)}
\email{gustavoaraujo@servidor.uepb.edu.br}

\author[Barbosa]{Anderson Barbosa}
\address[A. Barbosa]{\mbox{}\newline\indent Departamento de Matem\'atica \newline\indent
Universidade Federal da Para\'iba \newline\indent
Jo\~ao Pessoa - PB \newline\indent 
58.051-900 (Brazil) \newline\indent
and \newline\indent
Departamento de Ci\^encias Exatas e Aplicadas \newline\indent
Centro de Ensino Superior do Serid\'o \newline\indent
Universidade Federal do Rio Grande do Norte \newline\indent
Caicó - RN \newline\indent
59.300-000 (Brazil)
}
\email{afsb@academico.ufpb.br}

\author[Seoane]{Juan B. Seoane-Sepúlveda}
\address[J. B. Seoane-Sepúlveda]{\mbox{}\newline\indent Instituto de Matem\'atica Interdisciplinar (IMI) \newline\indent
Departamento de An\'alisis Matem\'atico y Matem\'atica Aplicada \newline\indent
Facultad de Ciencias Matem\'aticas \newline\indent
Universidad Complutense de Madrid \newline\indent
Madrid \newline\indent 
28040 (Spain)
}
\email{jseoane@ucm.es}

\thanks{$^{*}$Corresponding author.}
\thanks{This paper was written during a 1-year visit of the first author at the Universidad Complutense de Madrid. This study was financed in part by the Coordena\c{c}\~{a}o de Aperfei\c{c}oamento de Pessoal de N\'{\i}vel Superior - Brasil (CAPES) - Finance Code 001}

\keywords{Separable Quotient Problem, Operator range, Semigroup Theory.}

\subjclass{46B20, 46B42, 46B99}

\begin{abstract}
In this paper, in addition to establishing several new sufficient or equivalent conditions for the well-known Separable Quotient Problem (SQP), we show that every infinite-dimensional Banach space lacking the Lotz property admits a separable quotient, thereby revealing a surprising connection between the SQP and semigroup theory.
\end{abstract}

\maketitle

\tableofcontents

\section{Introduction and preliminaries}

The following problem, commonly referred to as the \textit{Separable Quotient Problem}, is one of the most significant open questions in the theory of Banach spaces:

\begin{SQP}
Given an infinite-dimensional Banach space $X$, does there exist a closed infinite-dimensional subspace $Y$ of $X$ such that $X/Y$ is an infinite-dimensional separable Banach space?
\end{SQP}

As pointed out in \cite{Brech2023}, this problem has likely been considered since the 1930s, alongside other fundamental questions that emerged from Banach's seminal work \cite{banachtese}. Although the formulation of the question is commonly attributed to Banach and Mazur, there is no explicit mention of it in Banach's book \cite{banach1,banach3}, nor is there any formal record of it. Furthermore, the earliest known explicit reference appears to be in Rosenthal's paper \cite{rosenthal}.

Quotients of Banach spaces serve as an essential tool for analyzing and understanding the structural properties of the spaces themselves. This is exemplified by the so-called three-space problems, which investigate the following: If two of the three spaces $X$, $Y \subseteq X$, and $X/Y$ possess a certain property, can we conclude that the third space also shares this property? We refer to \cite{litrico} for various examples of three-space properties in the literature.

During the 1960s and 1970s, several positive results concerning the SQP were obtained. The following classes of Banach spaces are known to admit separable quotients:

\begin{itemize}
\item Banach spaces containing $c_0$ (Bessaga--Pe\l czy\'nski \cite{bessaga1958b});
\item Reflexive spaces (Pe\l czy\'nski \cite{pelczynski1964});
\item Weakly compactly generated spaces (Amir--Lindenstrauss \cite{amir1968});
\item Dual spaces (Argyros--Dodos--Kanellopoulos \cite{argyros2008});
\item Spaces whose dual contains an unconditional basic sequence (Hagler--Johnson \cite{hagler1977});
\item Spaces whose itself or the topological dual contains a copy of $c_0$ or $\ell_1$ (Ferrando--Kakol--López-Pellicer--\'Sliwa \cite{CKLS});
\item Weakly countably determined spaces (W\'ojtowicz \cite{Wojtowicz1997});
\item Banach spaces with a Schauder decomposition (Saxon--Wilansky \cite{saxon1977}).
\end{itemize}

From now on, $\mathbb{K} = \mathbb{R}$ or $\mathbb{C}$. Here, as usual, $$\displaystyle c_0 = \{ (x_n)_{n\in\mathbb{N}} \in  \mathbb{K}^\mathbb{N} : \lim_{n \to \infty} x_n = 0 \},$$ and for $1\leq p< \infty$, $$\ell_p = \{ (x_n)_{n\in\mathbb{N}} \in \mathbb{K}^\mathbb{N} : \sum_{i=1}^{\infty}|x_i|^p < \infty \}$$ and $$\ell_\infty = \{ (x_n)_{n\in\mathbb{N}} \in \mathbb{K}^\mathbb{N} : \sup_{n\in \mathbb{N}} |x_n| < \infty \}.$$ A Banach space \( X \) is called \emph{weakly compactly generated} (WCG) if there exists a weakly compact subset \( K \subset X \) such that, in the strong (norm) topology, the subspace generated by $K$ is dense in $X$, that is, every element of $X$ can be approximated in norm by finite linear combinations of elements of $K$.

The Banach space $X$ is called \textit{weakly countably determined} (WCD) whenever there is a sequence $\mathcal{A}=(A_n)_{n\in\mathbb{N}}$ of absolutely convex, closed and bounded subsets of $X$ with $X=\bigcup_{I\in J(\mathcal{A})}\bigcap_{n\in I}A_n$ where $J(\mathcal{A})=\{I\subset \mathbb{N}:\ \bigcap_{n\in I}A_n\ \text{is weakly compact}\}$. A sequence \( (x_n)_{n\in\mathbb{N}} \) in a topological vector space \( X \) is called a \textit{basic sequence} if it is a Schauder basis for its closed linear span, that is, for all $x \in \overline{\operatorname{span}} \{x_n\}$,
\[
x = \sum_{n=1}^\infty a_n x_n
\]
for a unique sequence of scalars \( (a_n)_{n\in\mathbb{N}} \). The sequence is said to be \textit{unconditional} if the convergence of the above series is unconditional, that is, for every permutation \( \pi: \mathbb{N} \to \mathbb{N} \),
\[
\sum_{n=1}^\infty a_n x_n \quad \text{converges if and only if} \quad \sum_{n=1}^\infty a_{\pi(n)} x_{\pi(n)} \quad \text{also converges.}
\]

Moreover, the \textit{Schauder decomposition} in a Banach space can be defined as follows: We say that a Banach space \( X \) admits a Schauder decomposition if there exist closed subspaces \( X_n \subset X \) (\( n \in \mathbb{N} \)) such that:
\begin{itemize}
\item \( X = \bigoplus_{n=1}^\infty X_n \), that is, every element \( x \in X \) can be written uniquely as
\[
x = \sum_{n=1}^\infty x_n, \quad \text{with } x_n \in X_n.
\]
The sum is considered in the normed sense, that is, the series converges in norm.
\item The projection \( P_n: X \to X_n \), defined by \( P_n(x) = x_n \), is linear and continuous for each \( n \in \mathbb{N} \).
\end{itemize}

Obviously, the Schauder decomposition is a generalization of the Schauder basis, since every space with a Schauder basis admits a Schauder decomposition.

Although the SQP remains open for Banach spaces, the analogous question of whether every non-normable Fréchet space (i.e., infinite-dimensional, locally convex, complete, and metrizable space) admits a separable quotient has already been settled. Indeed, if $X$ is a non-normable Fr\'echet space, a result by Eidelheit \cite{Eidelheit} ensures that $X$ has a quotient isomorphic to $\mathbb{K}^{\mathbb{N}}$.

In \cite{Drewnowski}, Drewnowski posed a more general question: Does every complete, metrizable, infinite-dimensional topological vector space $X$ contain a closed subspace $Y$ such that the dimension of the quotient $X/Y$ is the cardinality of the continuum (in short, $\dim(X/Y) = \mathfrak{c}$)? The same paper includes a remark stating that every infinite-dimensional Fr\'echet space admits a quotient of dimension $\mathfrak{c}$. Drewnowski's problem was resolved by Popov (see \cite{Popov}). He showed that for $0 < p < 1$, the space $X := L_p([0,1]^{2^{\mathfrak{c}}})$ does not admit a suitable closed subspace $Y$ such that $\dim(X/Y) \leq \mathfrak{c}$. Consequently, $X$ does not admit a separable quotient.

Above, as usual, for $p\in(0,\infty)$, $L_{p}(\Omega)$ will denote the space of all (Lebesgue classes of) measurable functions $f:\Omega \rightarrow\mathbb{K}$ such that $|f|^{p}$ is $\mu$-integrable on $\Omega$, where $(\Omega,\mathcal{M},\mu)$ is a measure space and $\mu$ is a positive measure. Recall that if $1\leq p<\infty$ (if $0<p<1$) then $L_{p}(\Omega)$ becomes a Banach (quasi-Banach, resp.) space under the norm ($p$-norm, resp.) $\Vert f\Vert_{p}=(\int_{\Omega}|f|^{p}d\mu)^{1/p}$.

This paper is structured as follows. In Section \ref{sec2}, we present key preliminaries and known equivalences related to the SQP. Section \ref{sec3} introduces a novel approach by connecting the SQP with semigroup theory, proving that every infinite-dimensional Banach space that does not possess the Lotz property admits a separable quotient. As a consequence, for instance, every Banach lattice space with a quasi-interior point admits a separable quotient. In Section \ref{sec4}, we explore partial converses of the main result of Section \ref{sec3}, analyzing conditions on Banach spaces under which the existence of separable quotients implies the absence of the Lotz property. Finally, Section \ref{sec5} introduces new sufficient or equivalent conditions for the SQP, particularly through the lens of operator ranges and continuous seminorms.

\section{Preliminaries}\label{sec2}

We will start by presenting some equivalences related to the SQP, which will be useful for us for obtaining new tools and generalizations. The equivalence of (2) and (3) below, even for locally convex Hausdorff spaces, is an obvious consequence of the Bipolar Theorem (see also \cite{Kakol2014,saxon2016,saxon2017}). The equivalence of (1) and (4) is due to Saxon--Wilansky \cite{saxon1977}. Recall that a locally convex space $X$ is called \textit{barreled} if every \textit{barrel} --a closed, convex, balanced and absorbing set in $X$ (see \cite{narici})-- is a neighborhood of the origin. See also \cite{bennet1973,johnson1972} for some partial results related to the next theorem.

\begin{theorem}[Saxon--Wilansky \cite{saxon1977}]\label{t1}
The following statements are equivalent for an infinite-dimensional Banach space \( X \):
\begin{enumerate}
\item \( X \) contains a dense non-barreled subspace.
\item \( X \) admits a strictly increasing sequence of closed subspaces whose union is dense in \( X \).
\item \( X^* \) admits a strictly decreasing sequence of weak$^*$ ($\mathrm{w}^*$ for short)-closed subspaces whose intersection consists only of the zero element.
\item \( X \) has a separable quotient.
\end{enumerate}
\end{theorem}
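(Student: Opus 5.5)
I will prove the equivalences in the cycle (4)$\Rightarrow$(2)$\Rightarrow$(1)$\Rightarrow$(2)$\Rightarrow$(4), with (2)$\Leftrightarrow$(3) treated separately by duality.

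\textbf{(2)$\Leftrightarrow$(3).} For a closed subspace $M\subseteq X$, the annihilator $M^\perp$ is a w$^*$-closed subspace of $X^*$. The Bipolar Theorem gives ${}^\perp(M^\perp)=M$, and every w$^*$-closed subspace $N$ satisfies $N=({}^\perp N)^\perp$. Hence $M\mapsto M^\perp$ is an inclusion-reversing bijection between closed subspaces of $X$ and w$^*$-closed subspaces of $X^*$. It sends strictly increasing chains to strictly decreasing ones. Moreover
\[
\Bigl(\bigcup_n M_n\Bigr)^\perp=\bigcap_n M_n^\perp,
\]
and $\bigcup_n M_n$ is dense exactly when this annihilator is $\{0\}$.

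\textbf{(4)$\Rightarrow$(2).} Let $q:X\to X/Y$ be a quotient map with $X/Y$ infinite-dimensional and separable. Choose a dense sequence $(z_k)$ in $X/Y$. After passing to a linearly independent subsequence spanning a dense subspace, put
\[
E_n=\operatorname{span}\{z_1,\dots,z_n\}, \qquad M_n=q^{-1}(E_n).
\]
Each $E_n$ is finite-dimensional, hence closed, so each $M_n$ is closed. The chain $(M_n)$ is strictly increasing, and $\bigcup_n M_n=q^{-1}\bigl(\bigcup_n E_n\bigr)$ is dense because $q$ is open.

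\textbf{(2)$\Rightarrow$(4).} This is the main step. Given closed subspaces $M_1\subsetneq M_2\subsetneq\cdots$ with dense union, I will choose unit vectors $x_n\in M_{n+1}$ and functionals $f_n\in M_n^\perp$ so that $(x_n,f_n)$ is a biorthogonal-type system. The $f_n$ are to be w$^*$-null-ish or bounded, with $f_n(x_m)=0$ for $m<n$ and $f_n(x_n)=1$. The first condition is automatic, since $x_m\in M_{m+1}\subseteq M_n$ for $m<n$ and $f_n$ kills $M_n$. Using a Mazur-type basic sequence construction in $X^*$, I will pass to a subsequence of $(f_n)$ that is w$^*$-basic, or at least w$^*$-linearly independent in a strong sense. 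I then take $Y=\bigcap_n\ker f_n$ and aim to show that $X/Y$ is infinite-dimensional (the $x_n$ separate the $f_n$) and separable.

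For separability, the idea is that the map $X\to\mathbb{K}^{\mathbb{N}}$, $x\mapsto(f_n(x))_n$, has kernel $Y$. The operator $T:X\to\ell_\infty$ given by $T(x)=(f_n(x))_n$, with $\|f_n\|$ normalized, induces an injective map $X/Y\to\ell_\infty$. Its range should be made to lie in a separable space, for instance $c_0$ after rescaling by weights $\varepsilon_n\to0$ when the $f_n$ are bounded. Separability then follows because $\bigcup_n M_n$ is dense and $T(M_n)$ lies in a finite-dimensional coordinate span. So $T(X)\subseteq\overline{\bigcup_n T(M_n)}$, which is separable. The issue is that the range being separable does not make the quotient separable unless the range is closed. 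This is the main obstacle, and I expect to handle it with the Saxon--Wilansky device of choosing the $f_n$ carefully, as a w$^*$-basic sequence with bounded projections. This gives $X/\overline{Y'}$ an isomorphism onto a closed subspace, possibly after replacing $Y$ by a slightly larger kernel.

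\textbf{(1)$\Leftrightarrow$(2).} If (2) holds, then $D=\bigcup_n M_n$ is dense, and I will show it is not barreled. Each $M_n$ is nowhere dense in $D$, since it is a proper closed subspace. One then constructs a barrel in $D$ of the form
\[
B=\bigcap_n\bigl(M_n+\varepsilon_n B_X\bigr)\cap D,
\]
or uses $f_n$ with $\|f_n\|\to\infty$ and $f_n|_{M_k}$ eventually zero, so that $\{x:\sup_n|f_n(x)|\le1\}\cap D$ is absorbing in $D$ but not a neighborhood. Conversely, if $D$ is dense and non-barreled, there is a pointwise bounded but unbounded family in $X^*$ restricted to $D$. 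By a gliding-hump argument I will extract $f_n$ with $\|f_n\|\to\infty$, each bounded on $D$ pointwise. Setting
\[
M_n=\Bigl\{x\in X: \sup_k |f_k(x)|\le n\text{-structured}\Bigr\}
\]
is not linear, so instead I will use the standard fact that a non-barreled normed space is a countable increasing union of non-dense subspaces of a suitable form. I will take closures to get (2), ensuring strictness by passing to a subsequence.
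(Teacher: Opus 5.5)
Your (2)$\Leftrightarrow$(3) is exactly the Bipolar argument the paper alludes to; the paper proves nothing else and simply cites Saxon--Wilansky. Your (4)$\Rightarrow$(2) and (2)$\Rightarrow$(1) are also fine: with $f_n\in M_n^\perp$, $\|f_n\|=n$, the set $\{d\in D:\sup_n|f_n(d)|\le 1\}$ is a barrel that is not a neighbourhood.

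The genuine gap is (1)$\Rightarrow$(2), which is the real content of the theorem. The ``standard fact'' you invoke is false: it is not true that a non-barrelled normed space is an increasing union of countably many non-dense subspaces. Take $D=\ell_1$ inside $X=\ell_2$. It is dense and non-barrelled: its $\ell_1$-unit ball is $\|\cdot\|_2$-closed, absolutely convex and absorbing, but not a $\|\cdot\|_2$-neighbourhood of $0$. Suppose $\ell_1=\bigcup_n E_n$ with each $E_n$ non-dense. Then the $\ell_2$-closures $M_n$ are proper closed subspaces, and the sets $M_n\cap\ell_1$ are $\|\cdot\|_1$-closed subspaces covering the Banach space $(\ell_1,\|\cdot\|_1)$. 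By Baire, $\ell_1\subseteq M_N$ for some $N$, so $M_N=\ell_2$, a contradiction. The same argument shows that no dense operator range is covered by countably many proper closed subspaces. So the chain cannot be carved out of $D$ itself; it must live on a new dense subspace, and no gliding hump on $D$ will produce it.

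What you can get elementarily is this. If $B\subset D$ is a barrel that is not a neighbourhood, then $B^\circ\subset X^*$ is pointwise bounded on $D$ but norm-unbounded. Choosing $h_n\in B^\circ$ with $\|h_n\|\ge 2^n$ gives $g_n=h_n/\|h_n\|\in S(X^*)$ with $\sum_n|g_n(d)|<\infty$ for $d\in D$, i.e.\ a strongly normal sequence. Equivalently, the span of the $X$-closure of $B\cap B_X$ is a proper dense operator range. Passing from here to a separable quotient, and then to (2) via your (4)$\Rightarrow$(2), needs a genuinely nontrivial input. Examples are \'Sliwa's extraction of a $\mathrm{w}^*$-basic subsequence (Theorem~\ref{t2.2}) or Theorem~\ref{t2.9}. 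Normalized $\mathrm{w}^*$-null sequences exist in every dual by Josefson--Nissenzweig, so the summability on a dense subspace must be used in an essential way.

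Your (2)$\Rightarrow$(4) is left unfinished, but the obstacle you name is a red herring. You need neither $\mathrm{w}^*$-basic sequences nor closed ranges, only your own triangular system. Take $x_n\in M_{n+1}\setminus M_n$, $f_n\in M_n^\perp$ with $f_n(x_n)=1$, and $Y=\bigcap_n\ker f_n$. If $d\in M_k$, then $f_n(d)=0=f_n(x_m)$ whenever $n\ge k>m$. Since $(f_n(x_m))_{n,m<k}$ is triangular with unit diagonal, there are scalars $c_m$ with $d-\sum_{m<k}c_mx_m\in Y$. Hence $q\bigl(\bigcup_nM_n\bigr)\subseteq\operatorname{span}\{q(x_m)\}$. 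This set is dense in $X/Y$, so the quotient is separable. The same triangularity makes the $q(x_m)$ linearly independent, so $X/Y$ is infinite-dimensional. Work in $X/Y$ directly rather than through the range of $T$.
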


Let \( X \) be a Banach space, we denote by \( S(X) \) the set of all vectors in \( X \) with norm $1$, and by \( X^* \) we denote the dual of $X$, that is, the vector space consisting of all functionals \( \displaystyle \varphi: X \to \mathbb{K} \) that are linear and continuous, endowed with the norm \( \|\varphi\| = \sup_{x \in S(X)} |\varphi(x)| \). We say that a sequence \( \displaystyle (y_n)_{n\in\mathbb{N}} \subset S(X^*) \) is \textit{normal} if \( \displaystyle \lim_{n \to \infty} y_n(x) = 0 \) for all \( x \in X \); clearly, normal sequences coincide with normalized $\mathrm{w}^*$-null sequences. The Josefson-Nissenzweig Theorem \cite{josefson1975,nissenzweig1975} states that the dual of any infinite-dimensional Banach space contains a normal sequence. It is easy to see that a sequence \( (y_n)_{n\in\mathbb{N}} \subset S(X^*) \) is normal if and only if the subspace \( \displaystyle \{x \in X : \lim_{n \to \infty} y_n(x) = 0\} \) is dense in \( X \).

We will say that a sequence \( (y_n)_{n\in\mathbb{N}} \subset S(X^*) \) is \textit{strongly normal} if the subspace\linebreak \(\displaystyle \{x \in X : \sum_{n=1}^{\infty} |y_n(x)| < \infty\} \) is dense in \( X \) (see \cite{sliwa2012}). Clearly, every strongly normal sequence in \( X^* \) is normal. The following theorem from \cite{sliwa2012} shows a connection between these concepts.

\begin{theorem}[\'Sliwa \cite{sliwa2012}]\label{t2.2}
Let \( X \) be an infinite-dimensional Banach space. The following conditions are equivalent:
\begin{enumerate}
\item \( X \) has a separable quotient.
\item \( X^* \) has a strongly normal sequence.
\item \( X^* \) has a basic sequence in the $\mathrm{w}^*$ topology.
\end{enumerate}
\end{theorem}

From \cite[Theorem 1]{sliwa2012}, we know that every strongly normal sequence in \( X^* \) contains a subsequence \( (y_n)_{n\in\mathbb{N}} \) that is a basic sequence in the $\mathrm{w}^*$ topology. Conversely, every normalized basic Schauder sequence in \( (X^*, \mathrm{w}^*) \) is strongly normal (see \cite[Proposition 2]{sliwa2012}).

It is worth noting that, as shown in \cite{sliwa2012}, if $X$ is an infinite-dimensional weakly compactly generated Banach space, then every normal sequence in $X^*$ contains a strongly normal subsequence. This result provides a partial answer to a question posed by \'Sliwa himself, who asked whether this property holds for arbitrary Banach spaces. Although it demonstrates that a large class of Banach spaces satisfies the property, the general problem remains open.

A slightly stronger property than condition (2) of Theorem \ref{t2.2} is given in \cite{CKLS}. As mentioned in \cite{CKLS}, this property turns out to be equivalent to the existence in \( X^* \) of a basic sequence equivalent to the canonical basis of \( c_0 \).

\begin{proposition}[Ferrando--Kakol--López-Pellicer--\'Sliwa \cite{CKLS}]\label{p2.4}
A Banach space \( X \) has a quotient isomorphic to \( \ell_1 \) if and only if \( X^* \) contains a normal sequence \( (y_n)_{n\in\mathbb{N}} \) such that \( \sum_{n=1}^{\infty} |y_n(x)| < \infty \) for all \( x \in X \).
\end{proposition}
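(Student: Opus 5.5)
Both directions will be proved by identifying the quotient map with its adjoint. The key fact is that a bounded operator $Q: X \to \ell_1$ is onto if and only if $Q^*: \ell_\infty \to X^*$ is an isomorphism onto its range.

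\emph{($\Rightarrow$)} Suppose $Q: X \to \ell_1$ is a surjective bounded operator. Let $e_n^* \in \ell_\infty = \ell_1^*$ be the coordinate functionals, and set $z_n := Q^* e_n^* = e_n^* \circ Q \in X^*$. By the open mapping theorem, $Q^*$ is an isomorphism onto its range, so $\|z_n\| \ge c\|e_n^*\| = c > 0$. Also $\|z_n\| \le \|Q\|$. Put $y_n := z_n/\|z_n\|$, so that $y_n \in S(X^*)$. For every $x \in X$ we have
\[
\sum_n |y_n(x)| \le c^{-1}\sum_n |(Qx)_n| = c^{-1}\|Qx\|_1 < \infty .
\]
In particular $y_n(x) \to 0$ for every $x$, so $(y_n)$ is normal with the required summability.

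\emph{($\Leftarrow$)} Suppose $(y_n)\subset S(X^*)$ satisfies $\sum_n |y_n(x)|<\infty$ for all $x\in X$. Define $T: X \to \ell_1$ by $Tx = (y_n(x))_n$. This map is well defined and linear. It is bounded by the closed graph theorem, since coordinate convergence in $\ell_1$ is compatible with each $y_n$ being continuous; alternatively, apply the uniform boundedness principle to the finite-sum maps $x \mapsto (y_1(x),\dots,y_N(x))$.

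The range of $T$ need not be all of $\ell_1$, so the plan is to pass to a subsequence on which $T$ composed with a coordinate projection is onto. This is the main obstacle. By Theorem \ref{t2.2} and the remark after it (\cite[Theorem 1]{sliwa2012}), a subsequence of $(y_n)$ is $\mathrm{w}^*$-basic. Alternatively, one can extract a subsequence directly.

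We then look at the adjoint. Consider $T^*: \ell_\infty \to X^*$, given by $T^*(a) = \sum a_n y_n$ with the series converging $\mathrm{w}^*$. Surjectivity of $T$ is equivalent to the lower estimate $\|\sum a_n y_n\| \ge c\|a\|_\infty$ for finitely supported $a$; closedness of the range plus injectivity of $T^*$ then suffice.

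So the goal is a subsequence $(y_{n_k})$ that is equivalent to the $c_0$ basis from below, i.e. $\|\sum a_k y_{n_k}\| \ge c\max_k|a_k|$. The upper estimate $\|\sum a_k y_{n_k}\| \le C\max_k |a_k|$ already holds automatically, with $C=\|T\|$, because $\sup_{\|x\|\le1}\sum|y_n(x)| \le \|T\|$. For the lower estimate it is enough to have a $\mathrm{w}^*$-basic subsequence with basis constant $K$: then $|a_k| = \|a_k y_{n_k}\| \le 2K\|\sum a_j y_{n_j}\|$.

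To obtain such a subsequence, I would run the standard gliding-hump argument. Using $y_n \to 0$ pointwise, choose $x_k \in X$ with $\|x_k\|\le 1$ and $|y_{n_k}(x_k)| > 1/2$. At the same time choose $n_k$ large enough that $|y_{n_k}(x_j)|$ is tiny for $j<k$, and use the $\ell_1$-summability (the uniform tail bound on finitely many $x_j$) to make the earlier functionals small on $x_k$. This yields an almost biorthogonal system, which gives the lower $c_0$ estimate.

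With the lower estimate in hand, $T' := (y_{n_k}(\cdot))_k : X \to \ell_1$ has adjoint that is an isomorphism onto its range. Hence $T'$ is onto, and $X/\ker T' \cong \ell_1$. The hardest point is keeping the earlier functionals small on later $x_k$; I would handle it by choosing $x_k$ from a $\mathrm{w}^*$-dense set of finite-codimensional constraints, i.e. $x_k \in \bigcap_{j<k}\ker y_{n_j}$ approximately. This is possible because the $y_{n_j}$ are finitely many, and a norm-$1$ functional has norm close to $1$ on a finite-codimensional subspace after passing to a far-out index.
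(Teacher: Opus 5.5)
The paper gives no proof of this proposition (it is quoted from \cite{CKLS}), so your argument has to stand on its own. The direction ($\Rightarrow$) is correct. In ($\Leftarrow$), however, the step everything rests on is false. A lower estimate $\|\sum_k a_k y_{n_k}\|\ge c\max_k|a_k|$ for \emph{finitely supported} $a$ only says that $T'^*$ is bounded below on $c_0$. Surjectivity of $T'$ is equivalent to $T'^*$ being bounded below on all of $\ell_\infty$, and the first does not imply the second.

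Here is a counterexample. Let $\Gamma=\{(j,m)\in\mathbb{N}^2: j<m\}$, $X=\ell_1(\Gamma)$ and $y_n(j,m)=\delta_{nj}-\delta_{nm}$. Then $\|y_n\|=1$ and $\sum_n|y_n(x)|\le 2\|x\|_1$. For finitely supported $a$ we have $\|\sum_n a_ny_n\|=\sup_{j<m}|a_j-a_m|\ge\max_n|a_n|$ (take $m$ beyond the support), so $(y_n)$ is even equivalent to the unit vector basis of $c_0$. But $Te_{(j,m)}=e_j-e_m$, so $T(X)\subset\{z\in\ell_1:\sum_i z_i=0\}$ and $T$ is not onto. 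Here $\sum_n y_n=0$ in the $\mathrm{w}^*$ topology, i.e.\ $T^*(1,1,\dots)=0$, so $T^*$ is not even injective on $\ell_\infty$; the sequence is norm-basic but not $\mathrm{w}^*$-basic.

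So both of your chains break at the second arrow:
\[
\mathrm{w}^*\text{-basic}\Rightarrow\text{lower }c_0\text{-estimate}\Rightarrow\text{onto},
\qquad
\text{almost biorthogonal}\Rightarrow\text{lower }c_0\text{-estimate}\Rightarrow\text{onto}.
\]
The $c_{00}$-estimate does give $c_0\hookrightarrow X^*$, and Bessaga--Pe\l czy\'nski then yields a complemented $\ell_1$ in $X$. That is a different theorem, though, and not an argument about $T'$.

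Both routes can be repaired by keeping the information you discard.

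\emph{The $\mathrm{w}^*$-basic route.} A $\mathrm{w}^*$-basic subsequence is available from \cite[Theorem 1]{sliwa2012}, since your sequence is strongly normal. Use \emph{uniqueness} of $\mathrm{w}^*$-expansions rather than the basis constant. Every $f$ in the $\mathrm{w}^*$-closed span $E$ is $f=\mathrm{w}^*\text{-}\sum_k a_ky_{n_k}$ with $(a_k)$ bounded, because the terms $a_ky_{n_k}$ are $\mathrm{w}^*$-null, hence norm bounded, and $\|y_{n_k}\|=1$. Therefore $E=T'^*(\ell_\infty)$, and uniqueness makes $T'^*$ injective on $\ell_\infty$. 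Since $E$ is norm closed, the open mapping theorem makes $T'^*$ an isomorphism onto $E$, hence $T'$ is onto.

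\emph{The gliding-hump route.} Choose $n_k$ so large that $\mathrm{dist}(y_{n_k},\mathrm{span}\{y_{n_j}:j<k\})>1/3$ and $|y_{n_k}(x_j)|<\varepsilon 2^{-k}$ for all $j<k$. Then pick $x_k\in\bigcap_{j<k}\ker y_{n_j}$ \emph{exactly}, with $\|x_k\|\le1$ and $|y_{n_k}(x_k)|>1/3$. This gives $\|T'x_k/y_{n_k}(x_k)-e_k\|_1<3\varepsilon/2$ for every $k$. The point is that the control covers \emph{all} $j\neq k$, including the infinitely many later indices. For $\varepsilon<2/3$ the map $b\mapsto\sum_k b_kT'x_k/y_{n_k}(x_k)$ is a small perturbation of the identity of $\ell_1$ and factors through $T'$, so $T'$ is onto.

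Finally, your claim that $y_n$ has norm close to $1$ on a finite-codimensional subspace for large $n$ is false in general. In $X=c$, take $y_n(x)=\tfrac12(x_n-\lim_i x_i)$: this has norm $1$ but norm $1/2$ on $c_0$. What does hold, and suffices, is $\liminf_n\mathrm{dist}(y_n,F)\ge 1/2$ for every finite-dimensional $F\subset X^*$. That is why the constant $1/3$ appears above.
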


The above proposition will play a crucial role in the proof of the main result of Section \ref{sec4} (Theorem \ref{onemain}).

Another result that will be essential for the proof of Theorem \ref{main} is the following theorem, which is an immediate consequence of \cite[Lemma 3.4]{JIMENEZ} (see also \cite{M}), obtained by simply taking $X = \{ 0\}$ in \cite[Lemma 3.4]{JIMENEZ} as the closed subspace of infinite codimension.

Let $X$ be a Banach space. We say that a subspace $R$ of $X$ is an \textit{operator range} if there exists a Banach space $Z$ and a continuous linear operator $T:Z\longrightarrow X$ such that $T(Z)=R$.

\begin{theorem}\label{t2.9}
A Banach space has a separable quotient if and only if it has a proper and dense operator range.
\end{theorem}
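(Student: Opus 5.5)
We prove both directions directly, using Theorem \ref{t1} to move between separable quotients and increasing sequences of closed subspaces.

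\emph{Separable quotient $\Rightarrow$ proper dense operator range.} Suppose $Y$ is a closed subspace with $X/Y$ infinite-dimensional and separable. First we build a dense operator range in $X/Y$. Choose a normalized sequence $(q_n)$ whose linear span is dense in $X/Y$. Define
\[
S:\ell_1\to X/Y,\qquad S((a_n))=\sum_n 2^{-n}a_nq_n .
\]
Then $S$ is continuous and its range is dense. The range is proper: otherwise $S$ would be surjective, and by the open mapping theorem $X/Y$ would be isomorphic to a quotient of $\ell_1$ by $\ker S$. Properness then follows because each $2^{-n}q_n$ has rapidly decaying norm, which makes $S$ compact, and a compact operator onto an infinite-dimensional Banach space cannot be surjective.

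Next we lift this range to $X$. Let $\pi:X\to X/Y$ be the quotient map. Consider the Banach space
\[
Z=\{(x,a)\in X\oplus\ell_1:\ \pi x=Sa\},
\]
which is closed in $X\oplus\ell_1$, and take $T(x,a)=x$. Then $T(Z)=\pi^{-1}(S(\ell_1))$. This set contains $Y$, and its image under $\pi$ is dense, so it is dense in $X$. It is proper because $S(\ell_1)\neq X/Y$.

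\emph{Proper dense operator range $\Rightarrow$ separable quotient.} Let $T:Z\to X$ be bounded with $R=T(Z)$ dense and $R\neq X$. Replacing $Z$ by $Z/\ker T$, we may assume $T$ is injective. Set
\[
X_n=\overline{T(nB_Z)},
\]
the closures of the images of the balls. These are closed, convex, bounded sets, not subspaces, so Theorem \ref{t1}(2) does not apply to them directly. The standard route instead shows that $R$ is a dense non-barreled subspace and then applies Theorem \ref{t1}(1). Equip $R$ with the norm topology inherited from $X$. The candidate barrel is
\[
D=\overline{T(B_Z)}^{R},
\]
the closure of $T(B_Z)$ taken in $R$. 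It is closed in $R$, absolutely convex, and absorbing in $R$, since $R=\bigcup_n nT(B_Z)$.

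The main obstacle is to show that $D$ is not a neighborhood of $0$ in $R$. Suppose it were. Then $\overline{T(B_Z)}$ would contain $R\cap \varepsilon B_X$, and since $R$ is dense, $\overline{T(B_Z)}\supseteq\varepsilon B_X$. The completeness argument from the proof of the open mapping theorem (successive approximation) then gives $T(2B_Z)\supseteq\varepsilon B_X$, so $T$ is onto. This contradicts $R\neq X$. Hence $R$ is a dense non-barreled subspace of $X$, and Theorem \ref{t1} yields a separable quotient. The only step needing care is this open-mapping successive-approximation argument, where completeness of $Z$ is used essentially; the rest is formal.
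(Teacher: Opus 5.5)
Your proof is correct, but it takes a different route from the paper. The paper gives no argument of its own: it gets Theorem~\ref{t2.9} by specializing \cite[Lemma 3.4]{JIMENEZ}, a statement relative to a closed subspace of infinite codimension, to the zero subspace. That buys brevity and a more general relative statement, but leaves all the work to the reference. Your argument is self-contained.

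\emph{Forward direction.} You never use Theorem~\ref{t1}. The diagonal operator $S:\ell_1\to X/Y$ is a norm limit of finite-rank operators, hence compact. So its dense range cannot be all of the infinite-dimensional space $X/Y$. The fibre product $Z$ then pulls this back to the proper dense operator range $\pi^{-1}(S(\ell_1))$, which also contains the prescribed subspace $Y$. This is a relative feature in the spirit of the cited lemma.

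\emph{Converse.} You show that a proper dense operator range $R=T(Z)$ is a dense non-barreled subspace. The barrel is $\overline{T(B_Z)}\cap R$, and the open-mapping successive-approximation argument rules out that it is a neighbourhood of $0$. You then invoke the Saxon--Wilansky implication (1)$\Rightarrow$(4) of Theorem~\ref{t1}. This makes explicit that the real depth of the equivalence lies in that implication; the rest is the open mapping principle.

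Some cosmetic points:
\begin{itemize}
\item Your first sentence on the properness of $S(\ell_1)$ (that $X/Y$ would then be a quotient of $\ell_1$) gives no contradiction by itself, since every separable Banach space is such a quotient. Only the compactness argument should remain.
\item The digression on the sets $\overline{T(nB_Z)}$ can be deleted.
\item The reduction to injective $T$ is unnecessary.
\item In the converse, $X$ is automatically infinite-dimensional, as Theorem~\ref{t1} requires, because a finite-dimensional space has no proper dense subspace. It is worth saying so explicitly.
\end{itemize}
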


\section{SQP and semigroup theory: A new approach}\label{sec3}

This section is devoted to the study of semigroups of continuous operators, with particular emphasis on the existence of strongly continuous semigroups that are not uniformly continuous.

We begin with a brief review of key concepts and results from semigroup theory that are relevant to our purposes (see \cite{pazy1983}).

From now on, for Banach spaces $X$ and $Y$, we denote by $\mathcal{L}(X,Y)$ the space of all continuous linear operators from $X$ to $Y$. When $X=Y$ we simply write $\mathcal{L}(X)$ instead of $\mathcal{L}(X,Y)$.

\begin{definition}
A semigroup of linear operators on $X$ is a family $(T(t))_{t\geq 0} \subset \mathcal{L}(X)$, denoted simply by $T$ when there is no risk of confusion, that satisfies:
\begin{itemize}
\item $T(0) = I$, where $I$ is the identity operator on $X$;
\item $T(t + s) = T(t)T(s)$, for all $t, s \geq 0$. 
\end{itemize}
If, furthermore, we have
\begin{itemize}
\item $\displaystyle \lim_{t \to 0^+} \|T(t) - I\|_{\mathcal{L}(X)} = 0$, we say that the semigroup is \textit{uniformly continuous};
\item $\displaystyle \lim_{t \to 0^+} \|T(t)x - x\|_X = 0$, for each $x \in X$, we say that the semigroup is \textit{strongly continuous}, or a $C_0$-\textit{semigroup}. If $T \subset \mathcal{L}(X)$ is a $C_0$-semigroup, its infinitesimal generator is the operator $A : D(A) \subset X \to X$ defined by
\[
Ax = \lim_{t \to 0^+} \frac{T(t)x - x}{t},
\]
where
\[
D(A) = \left\{ x \in X : \lim_{t \to 0^+} \frac{T(t)x - x}{t} \text{ exists} \right\}.
\]
\end{itemize}
\end{definition}

Given a $C_0$-semigroup $T \subset \mathcal{L}(X)$, it is well known that its infinitesimal generator $A$ is a closed and densely defined linear operator. It is also well known that $T$ is uniformly continuous if and only if $A$ is bounded.

For further details and basic results on semigroup theory, see, for instance, \cite{pazy1983}.

It is well known that every uniformly continuous semigroup is a $C_0$-semigroup. Moreover, for certain Banach spaces $X$, it is possible to establish the existence of a $C_0$-semigroup on $X$ that is not uniformly continuous. For instance, beyond the wide range of Banach spaces for which it is known that the classes of uniformly continuous semigroups and $C_0$-semigroups do not coincide, we will prove (see Theorem \ref{t14}) that in every Banach space with an unconditional Schauder decomposition these classes do not coincide.

Among the wide range of Banach spaces for which it is known that the classes of uniformly continuous semigroups and $C_0$-semigroups do not coincide, we can mention, for example, Hilbert spaces, $L_p(\Omega)$ spaces, Sobolev spaces, the space of bounded and uniformly continuous functions, separable spaces containing copies of $c_0$, spaces whose topological dual contains a copy of $\ell_1$, WLD spaces with copies of $c_0$, spaces that contain a copy of $c_0$ and whose dual unit ball is sequentially compact in the $\mathrm{w}^*$ topology, among others.

Although we still do not have a general criterion to determine whether a strongly continuous semigroup is uniformly continuous or not, we can verify the existence of non-uniformly continuous semigroups of operators on every Banach space. Before this, recall that if $X$ is a Banach space and $(P_n)_{n\in\mathbb{N}}$ is a sequence in $\mathcal{L}(X)$, then the sequence $(X_n, P_n)_{n\in\mathbb{N}}$, where $X_n:=P_n(X)$ and $$P_n P_m = \delta_{nm} P_{\min\{n,m\}},$$ with $\delta_{nm}$ denoting the Kronecker delta, is called a \textit{generalized biorthogonal system} (see Definition 15.4 of \cite{Singer1981}).

\begin{proposition}
Let $X$ be an infinite-dimensional Banach space. Then there exists a semigroup of continuous operators on $X$ that is not uniformly continuous.
\end{proposition}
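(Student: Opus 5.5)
The plan is to produce a \emph{norm-discontinuous} semigroup of bounded operators from an idempotent in $\mathcal{L}(X)$, with the idempotent taken from a generalized biorthogonal system as the remark before the proposition suggests. I do not aim for a $C_0$-semigroup. The statement only asks for a semigroup of continuous operators that is not uniformly continuous, and it cannot be strengthened to a $C_0$-semigroup valid for every infinite-dimensional $X$. The reason is that in spaces with the Lotz property (for instance $\ell_\infty$, by Lotz's theorem) every $C_0$-semigroup is uniformly continuous. This is exactly what makes the later results of Section~\ref{sec3} nontrivial.

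First I build the generalized biorthogonal system. Since $X$ is infinite-dimensional, I choose inductively linearly independent vectors $x_1,x_2,\dots \in X$ and functionals $f_1,f_2,\dots \in X^*$ with $f_k(x_j)=\delta_{kj}$. This uses Hahn--Banach together with the fact that $\operatorname{span}\{x_1,\dots,x_n\}$ and $\bigcap_{k\le n}\ker f_k$ are proper, so a new vector and functional can always be added. I then set
\[
P_n x=\sum_{k=1}^{n} f_k(x)\,x_k .
\]
These are finite-rank continuous projections satisfying $P_nP_m=P_{\min\{n,m\}}$, so $(P_n(X),P_n)_{n\in\mathbb{N}}$ is a generalized biorthogonal system. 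Infinite-dimensionality is what guarantees that every $P_n$, and every difference $P_n-P_m$ with $n\neq m$, is a nonzero operator.

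Second, I fix $n$ and define
\[
T(0)=I,\qquad T(t)=I-P_n\quad (t>0).
\]
Since $(I-P_n)^2=I-P_n$, we get $T(t)T(s)=T(t+s)$ for $t,s>0$, and the cases with $t=0$ or $s=0$ are immediate. Hence $T$ is a semigroup in $\mathcal{L}(X)$. Finally, for every $t>0$ we have $\|T(t)-I\|=\|P_n\|\ge \|P_n x_1\|/\|x_1\|=1$. So $\|T(t)-I\|\not\to0$ as $t\to0^+$, and $T$ is not uniformly continuous.

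The point I expect to need care is the first step: checking that the inductive biorthogonal construction never stalls, so that the $P_n$ are genuinely distinct nonzero idempotents in every infinite-dimensional space. This is routine but is the only place where the hypothesis on $X$ is used. If one wants a non-trivial family (rather than a single idempotent), I would also verify that for $0<t<1$ the operators $T_n(t)=I-P_n$ with $n=\lceil 1/t\rceil$ do \emph{not} form a semigroup. This is because $(I-P_n)(I-P_m)=I-P_{\max\{n,m\}}$ is governed by $\min\{t,s\}$ rather than by $t+s$. This check confirms that the single-idempotent construction is the natural one here.
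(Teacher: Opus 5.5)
Your argument is correct, but it takes a genuinely different and much more elementary route than the paper.

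The paper uses the Josefson--Nissenzweig theorem and a lemma of Gonz\'alez--Mart\'{\i}nez-Abej\'on to obtain mutually annihilating idempotents $P_n$ with $P_nx\to 0$ for all $x$. It then forms the diagonal semigroup $T(t)=\sum_n a_n^tP_n$ with $a_n=c^n$. Uniform continuity is excluded because a bounded generator would satisfy $AP_nx=\ln(a_n)P_nx$, forcing $\sup_n|\ln a_n|\le\|A\|$.

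You instead use a constant idempotent for $t>0$, so the failure is a plain jump at $t=0$. This shows the proposition is essentially trivial: even $T(t)=0$ for $t>0$ works, in every nonzero Banach space. So your claim that the infinite-dimensionality hypothesis is used in the inductive construction is overstated, since a single rank-one projection $x\mapsto f_1(x)x_1$ already suffices. Also, your nested projections satisfy $P_nP_m=P_{\min\{n,m\}}$, not the paper's $P_nP_m=\delta_{nm}P_{\min\{n,m\}}$. They are therefore not a generalized biorthogonal system in the paper's sense, which is harmless because you use only one of them.

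The paper's construction buys three things. Its semigroup is norm-continuous on $(0,\infty)$. Its failure at $0$ is spectral (the eigenvalues $\ln a_n\to-\infty$) rather than a jump. And it is the prototype of the series semigroups $\alpha(t)x+\sum_n\alpha_n(t)P_n(x)$ of Section~\ref{sec4}, where additional hypotheses make it $C_0$.

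As you rightly note, neither construction can be $C_0$ in general: on $\ell_\infty$ this is excluded by Lotz's theorem. So for the statement as posed, the extra machinery gives no stronger conclusion.
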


\begin{proof}
From the Josefson-Nissenzweig Theorem \cite{josefson1975,nissenzweig1975} and by \cite[Lemma 3.1.19.]{gonzalez2010tauberian}, we obtain a generalized biorthogonal system $(X_n,P_n)_{n\in\mathbb{N}}$ with
\[
\lim_{n \to \infty}\|P_n(x)\|=0
\]
for all $x\in X$. Now, take $(a_n)_{n\in\mathbb{N}}\in \bigcap_{p>0}\ell_{p}$ a sequence of positive terms ($a_n=c^n$ for $0<c<1$ is an example). Let us consider $\alpha_n:[0,\infty)\longrightarrow\mathbb{R}$ given by $\alpha_n(t)=a_n^t$. Note that for each $t>0$ and $x\in X$, we have
\[
\sum_{n=1}^{\infty}\|\alpha_n(t)P_n(x)\|\leq \sup_{n\in \mathbb{N}}\|P_n(x)\| \cdot \sum_{i=1}^{\infty}|a_n|^t<\infty.
\]
Since $X$ is Banach, the family of operators $(T(t))_{t>0}\subset\mathcal{L}(X)$ given by
\[
T(t)(x)=\sum_{n=1}^{\infty}\alpha_n(t)P_n(x)
\]
is well defined. If $t,s>0$, then for $x\in X$ we have
\begin{align*}
T(t)T(s)(x)&=\sum_{i=1}^{\infty}\alpha_i(t)P_i\left(\sum_{j=1}^{\infty}\alpha_j(s)P_j(x)\right)=\sum_{n=1}^{\infty}\alpha_n(t)\alpha_n(s)P_n(x)\\
&=\sum_{n=1}^{\infty}\alpha_n(t+s)P_n(x)=T(t+s)(x).
\end{align*}
Therefore, defining $T(0)=I$, we have that the family $(T(t))_{t\geq 0}$ is a semigroup of continuous operators on $X$. Now observe that for each $n\in\mathbb{N}$ and each $x\in X$, we have
\[
\lim_{t \to 0^+}\frac{T(t)\left(P_n(x)\right)-P_n(x)}{t}=\lim_{t \to 0^+}\frac{\alpha_n(t)-1}{t}P_n(x)=\ln(a_n)P_n(x).
\]

If the semigroup $(T(t))_{t\geq 0}$ were uniformly continuous, we would have $A\left(P_n(x)\right))=\ln(a_n)P_n(x)$ for all $n\in\mathbb{N}$ and for all $x\in X$, whence it would follow $|\ln(a_n)|\leq \|A\|$ for all $n\in\mathbb{N}$. But since $\lim_{n \to \infty}a_n=0$, it follows that $\|A\|=\infty$, which is a contradiction.
\end{proof}

Given an infinite-dimensional Banach space, a natural question arises: Does there exist a semigroup of continuous linear operators that is strongly continuous but not uniformly continuous? This question was addressed by Lotz \cite{lotz}, who investigated conditions under which pointwise convergence of operators implies uniform convergence. As a consequence, it was shown that if the space is a \textit{Grothendieck space} with the \textit{Dunford--Pettis property}, then the classes of strongly continuous and uniformly continuous semigroups coincide (see \cite{lotz}). Recall that a Banach space $X$ is said to be a Grothendieck space if every sequence in $X^*$ that converges in the $\mathrm{w}^*$ topology also converges in the weak topology on $X^*$. Moreover, we say that $X$ has the Dunford--Pettis property if
\[
\lim_{n\to\infty} x_n^*(x_n) = x^*(x),
\]
whenever $(x_n)_{n\in\mathbb{N}} \subset X$ converges weakly to $x$ and $(x_n^*)_{n\in\mathbb{N}} \subset X^*$ converges weakly to $x^*$.

We found \textcolor{black}{only a} few references in the literature addressing the existence of Banach spaces in which these classes of semigroups coincide (see \cite{lotz,neerven,Leung}), and no one discussing whether there is any relationship between such coincidence (or lack thereof) and the existence of separable quotients. This motivates the following definition (see \cite{neerven}):

\begin{definition}
We say that a Banach space $X$ has the Lotz property if every $C_0$-semigroup on $X$ is uniformly continuous
\end{definition}

The main result of \cite{lotz} shows that Grothendieck spaces with the Dunford--Pettis property possess the so-called Lotz property. This result is not optimal in the sense that there exist spaces which do not simultaneously have these two properties, and yet the classes of strongly continuous and uniformly continuous semigroups coincide on such spaces.

This was addressed by Leung \cite{Leung}, where the author was able to weaken the assumption that the space has the Dunford--Pettis property. Although Leung \cite{Leung} provided an explicit example showing that the coincidence of strongly and uniformly continuous semigroups does not imply that the space is Grothendieck and has the Dunford--Pettis property, Van Neerven \cite{neerven} proved that this implication holds when the space is a Banach lattice with a quasi-interior point.

Let us now state and prove the main result of this section.

\begin{theorem}\label{main}
Every infinite-dimensional Banach space that does not possess the Lotz property admits a separable quotient.
\end{theorem}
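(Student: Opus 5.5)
Since $X$ lacks the Lotz property, there is a $C_0$-semigroup $(T(t))_{t\geq 0}$ on $X$ that is not uniformly continuous. Equivalently, its infinitesimal generator $A:D(A)\subset X\to X$ is unbounded. The plan is to produce a proper dense operator range and then invoke Theorem \ref{t2.9}; the natural candidate is the domain $D(A)$ itself.

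First, recall the standard facts that $A$ is closed and densely defined. Endow $D(A)$ with the graph norm $\|x\|_A=\|x\|+\|Ax\|$. Closedness of $A$ means exactly that $(D(A),\|\cdot\|_A)$ is a Banach space $Z$. The inclusion $J:Z\to X$, $Jx=x$, is continuous because $\|x\|\le\|x\|_A$. Hence $D(A)=J(Z)$ is an operator range in $X$, and it is dense since $A$ is densely defined.

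Second, we show that $D(A)$ is proper, i.e.\ $D(A)\neq X$. Suppose instead that $D(A)=X$. Then $A$ is a closed operator defined on all of the Banach space $X$, so by the Closed Graph Theorem it is bounded. A $C_0$-semigroup with bounded generator is uniformly continuous (it equals $e^{tA}$), contradicting our choice of $T$. Thus $D(A)$ is a proper dense operator range, and Theorem \ref{t2.9} yields a separable quotient of $X$.

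The only step requiring care is the properness argument, which rests on the equivalence, quoted earlier, that a $C_0$-semigroup is uniformly continuous if and only if its generator is bounded, combined with the Closed Graph Theorem. Everything else is standard: completeness of the graph norm follows from closedness of $A$, and the infinite-dimensionality hypothesis is simply the one required in Theorem \ref{t2.9}. A possible alternative route works with the resolvent $R(\lambda,A)$ for large $\lambda$, which is a bounded injective operator with range $D(A)$; this gives the same operator range without introducing the graph norm.
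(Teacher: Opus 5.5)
Your proposal is correct and follows essentially the same route as the paper: you take $D(A)$ with the graph norm as a dense operator range, show it is proper via the Closed Graph Theorem and the fact that a $C_0$-semigroup with bounded generator is uniformly continuous, and conclude with Theorem \ref{t2.9}. The only cosmetic difference is that you apply the Closed Graph Theorem directly to $A$, whereas the paper applies it to the inverse of the inclusion $(D(A),\|\cdot\|_{G(A)})\to X$ to get equivalence of norms; the two are interchangeable.
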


\begin{proof}
Let \( X \) be a Banach space that does not possess the Lotz property, and let \( T \subset \mathcal{L}(X) \) be a \( C_0 \)-semigroup that is not uniformly continuous. Denote by \( A \) the infinitesimal generator of \( T \).

It is well known that \( A \) is a densely defined linear operator; that is, \( D(A) \) is dense in \( X \). Moreover, since \( A \) is closed, \( D(A) \), endowed with the graph norm
\[
\|x\|_{G(A)} := \|x\| + \|Ax\|,
\]
is itself a Banach space. Since \(\|x\|_{G(A)} \ge \|x\|\) for all \(x \in D(A)\), the inclusion map
\[
i : (D(A), \|\cdot\|_{G(A)}) \longrightarrow (X, \|\cdot\|)
\]
is continuous. Therefore, \( D(A) \) can be regarded as the image of a continuous linear operator between Banach spaces, and thus as an operator range.
	
Finally, recall that a \( C_0 \)-semigroup is uniformly continuous if and only if its infinitesimal generator is bounded. Since \( T \) is not uniformly continuous, its generator \( A \) must be unbounded, implying that \( D(A) \) is a proper subspace of \( X \). Indeed, assume (by  contradiction) that \( D(A) = X \). In that case, the inclusion map \( i \) would be a continuous and surjective linear operator between Banach spaces. By the \textit{Closed Graph Theorem}, its inverse would also be continuous, meaning that the graph norm \(\|\cdot\|_{G(A)}\) would be equivalent to the original norm on \(X\). Consequently, \( A \) would be bounded, contradicting the assumption that \(T\) is not uniformly continuous. Thus, \( D(A) \) is indeed a proper subspace of \( X \).

Therefore, \( D(A) \subset X \) is a proper and dense operator range. From Theorem \ref{t2.9}, we conclude that \( X \) admits a separable quotient, which completes the proof.
\end{proof}

\begin{remark}
The converse of the previous theorem does not hold. In fact, the result of Lotz \cite{lotz} provides the necessary counterexample since it can be shown that any Banach space that is either a Grothendieck space or has the Dunford--Pettis property admits a separable quotient. In fact, if the space is Grothendieck and $B_{X^*}$ is not sequentially compact in the $\mathrm{w}^*$ topology, we can  conclude that the space has a separable quotient by \cite{CKLS}. If $B_{X^*}$ is sequentially compact in the $\mathrm{w}^*$ topology, we have that $B_{X^*}$ is a reflexive space by the Erberlein--Smulian Theorem (see \cite{abk}), and then we guarantee the  separable quotient, for instance, by \cite{pelczynski1964}. Now, if the space has the Dunford--Pettis property, it is possible to prove that $X$ or $X^*$ contains a copy of $\ell_1$. Indeed, otherwise, by the Rosental's $\ell_1$-Theorem (see \cite{abk}), we would be able to construct sequences $(x_n)_{n\in\mathbb{N}}$ and $(x_n^*)_{n\in\mathbb{N}}$ in $X$ and $X^*$ respectively, with $x_n \overset{\mathrm{w}}{\to} 0 $, $x_n^* \overset{\mathrm{w}}{\to} 0 $ and $x_n^*(x_n) =1$, for all $n\in \mathbb{N}$.
\end{remark}

In view of the theorem above, to address the SQP now it suffices to restrict our attention to Banach spaces with the Lotz property.

Since, in the context of Banach lattice spaces with quasi-interior point (see \cite{meyer}), possessing the Lotz property is equivalent to being a Grothendieck space with the Dunford--Pettis property (see \cite{neerven} and Figure \ref{fig}), we obtain the following result:

\begin{corollary}
Every Banach lattice space with a quasi-interior point has a separable quotient.
\end{corollary}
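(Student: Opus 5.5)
Let $X$ be an infinite-dimensional Banach lattice with a quasi-interior point. We split into two cases according to whether $X$ has the Lotz property.

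\emph{Case 1: $X$ lacks the Lotz property.} Then Theorem \ref{main} applies directly and gives a separable quotient.

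\emph{Case 2: $X$ has the Lotz property.} By van Neerven's theorem \cite{neerven}, which is exactly where the quasi-interior point hypothesis is used, $X$ is then a Grothendieck space with the Dunford--Pettis property. It remains to show that such a space has a separable quotient. We follow the route in the Remark after Theorem \ref{main}, restricted to the Grothendieck part.

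\begin{itemize}
\item \emph{$B_{X^*}$ is not $\mathrm{w}^*$-sequentially compact.} Then the result of \cite{CKLS} cited in the Remark yields a separable quotient.
\item \emph{$B_{X^*}$ is $\mathrm{w}^*$-sequentially compact.} Take any sequence in $B_{X^*}$. It has a $\mathrm{w}^*$-convergent subsequence, and since $X$ is Grothendieck, that subsequence converges weakly. So $B_{X^*}$ is weakly sequentially compact, hence weakly compact by Eberlein--\v{S}mulian. Thus $X^*$ is reflexive, and so $X$ is reflexive. Pe{\l}czy\'nski's theorem \cite{pelczynski1964} then gives a separable quotient.
\end{itemize}

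Alternatively, in this last subcase $X$ is reflexive with the Dunford--Pettis property. Since weakly null sequences are then norm null, $X$ is finite-dimensional, which is excluded. This dispenses with the reflexive branch entirely.

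\emph{Main obstacle.} The only non-routine input is the step ``Grothendieck with non-$\mathrm{w}^*$-sequentially compact dual ball implies a separable quotient,'' which we take from \cite{CKLS}. If we wanted to avoid relying on the precise form of that citation, we would argue as follows. A non-$\mathrm{w}^*$-sequentially compact dual ball forces $\ell_1 \hookrightarrow X$ or some failure of separability behaviour. We would then use the Dunford--Pettis property as in the Remark: Rosenthal's $\ell_1$-theorem gives a copy of $\ell_1$ in $X$ or in $X^*$. Combined with the Grothendieck property, this yields $\ell_1$, or via duality $c_0$ or $\ell_1$, inside $X^*$. The Ferrando--K\c{a}kol--L\'opez-Pellicer--\'Sliwa criterion \cite{CKLS} listed in the introduction then gives a separable quotient directly.

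Either route completes Case 2, and together with Case 1 this proves the corollary.
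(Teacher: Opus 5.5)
Your proof is correct and follows the same route as the paper: split according to the Lotz property, apply Theorem~\ref{main} when it fails, and when it holds use van Neerven's theorem to get a Grothendieck space with the Dunford--Pettis property, which has a separable quotient by the argument in the Remark following Theorem~\ref{main}. Your observation that the reflexive subcase is vacuous (reflexive plus Dunford--Pettis forces finite dimension) is a nice tidy-up, and your optional fallback paragraph is loosely worded (a non-$\mathrm{w}^*$-sequentially compact dual ball forces $\ell_1$ into $X^*$, not into $X$), but nothing in the proof depends on it.
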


In view of everything discussed above, we conjecture that the SQP holds in general.

By narrowing the scope of the problem to spaces with the Lotz property, we aim to either generalize existing results or establish new positive results for certain classes of spaces that have not yet been investigated.

\begin{figure}[h!]
\begin{footnotesize}
\begin{tikzpicture}[
box/.style={draw, minimum width=2cm, minimum height=.8cm, align=center},
arrow/.style={-Stealth, thick},
dashedarrow/.style={-Stealth, thick, dashed},
node distance=1.3cm and 1.3cm
]

\node[box] (grodp) {Grothendieck spaces \\ with Dunford--Pettis \\ property};
\node[box, below=of grodp] (lotz) {Spaces with \\ the Lotz property};
\node[box, right=of lotz] (naolotz) {Spaces without \\ the Lotz property};
\node[box, above=of naolotz] (sq) {Spaces with \\ a Separable \\ quotient};

\draw[-Stealth] (grodp) -- (lotz);
\draw[-Stealth] (grodp) -- (sq);
\draw[-Stealth] (naolotz) -- (sq);
\draw[dashedarrow] (lotz) -- node[midway, below, sloped] {{\textbf {\Large ?}}} (sq);
\end{tikzpicture}
\hspace{.7cm}
\begin{tikzpicture}[
box/.style={draw, minimum width=2cm, minimum height=.8cm, align=center},
arrow/.style={-Stealth, thick},
dashedarrow/.style={-Stealth, thick, dashed},
node distance=1.3cm and 1.3cm
]

\node[box] (grodp) {Grothendieck spaces \\ with Dunford--Pettis \\ property};
\node[box, below=of grodp] (lotz) {Spaces with \\ the Lotz property};
\node[box, right=of lotz] (naolotz) {Spaces without \\ the Lotz property};
\node[box, above=of naolotz] (sq) {Spaces with \\ a Separable \\ quotient};

\draw[stealth-stealth] (grodp) -- (lotz);
\draw[-Stealth] (grodp) -- (sq);
\draw[-Stealth] (naolotz) -- (sq);
\draw[-Stealth] (lotz) -- (sq);
\end{tikzpicture}
\end{footnotesize}
\caption{On the left, the diagram represents the problem for an arbitrary Banach space, while on the right, it illustrates the scheme for Banach lattice spaces with a quasi-interior point}
\label{fig}
\end{figure}

\section{Partial converses}\label{sec4}

Although the converse of Theorem \ref{main} does not hold in general, one can consider this problem within more restricted classes of Banach spaces and, thus, obtain partial converses of Theorem \ref{main}. For notational convenience, from now on we shall write $T_t$ instead of $T(t)$. Our goal now is to develop a method to construct semigroups in arbitrary Banach spaces. To do so, we will start by assuming the existence of a semigroup of the form
\begin{equation}\label{eq1}
T_t(x) = \alpha(t)x + \sum_{i=1}^{\infty}\alpha_i(t)P_i(x),
\end{equation}
where, for all $n \in \mathbb{N}$, the functions $\alpha,\alpha_n: [0, \infty) \longrightarrow \mathbb{K}$ and the operato\textcolor{black}{r}s $P_n\in\mathcal{L}(X)\setminus\{0\}$ fulfill certain conditions, which will be stated and verified in due course.

It is important to note that the convergence of the series in equation \eqref{eq1} does not need to occur in the strong topology of $X$ (norm topology), but rather in the weak topology. Indeed, it is not dif\textcolor{black}{f}icult to prove the following result:

\begin{proposition}\label{p3.7}
Let $X$ and $Y$ Banach spaces and $(T_n)_{n\in\mathbb{N}}$ a sequence in $\mathcal{L}(X,Y)$. If for each $x\in X$ the sequence $(T_n(x))_{n\in\mathbb{N}}$ converges weakly in $Y$, then the limit operator
\[
T(x)=\lim_{n\to\infty}T_n(x),
\]
is continuous.
\end{proposition}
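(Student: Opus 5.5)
The plan is to verify linearity directly, to prove a uniform bound $\sup_n\|T_n\|<\infty$ via the Uniform Boundedness Principle, and then to pass to the limit using weak lower semicontinuity of the norm.

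\emph{Linearity.} Fix $x\in X$ and let $T(x)$ denote the weak limit of $(T_n(x))_{n\in\mathbb{N}}$. This limit is unique because the weak topology is Hausdorff. For $x,y\in X$ and scalars $\lambda,\mu$ we have $T_n(\lambda x+\mu y)=\lambda T_n(x)+\mu T_n(y)$. The right-hand side converges weakly to $\lambda T(x)+\mu T(y)$, since the weak topology is a vector topology. Hence $T$ is linear.

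\emph{Uniform bound.} For each $x\in X$, the sequence $(T_n(x))_{n\in\mathbb{N}}$ converges weakly, so it is weakly bounded. That is, for every $y^*\in Y^*$ the scalar sequence $(y^*(T_n x))_{n}$ is convergent and therefore bounded. Viewing $(T_n x)_n$ as a family of functionals on the Banach space $Y^*$ through the canonical embedding into $Y^{**}$, the Uniform Boundedness Principle gives $\sup_n\|T_n x\|<\infty$ for each $x$. A second application of the Uniform Boundedness Principle, now to $(T_n)_n\subset\mathcal{L}(X,Y)$ with $X$ Banach, yields $M:=\sup_n\|T_n\|<\infty$.

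\emph{Passing to the limit.} For $x\in X$, choose by Hahn--Banach some $y^*\in S(Y^*)$ with $y^*(Tx)=\|Tx\|$. Then
\[
\|Tx\|=\lim_{n\to\infty} y^*(T_nx)\le \limsup_{n\to\infty}\|T_n x\|\le M\|x\|.
\]
This shows that $T$ is bounded, hence continuous, with $\|T\|\le\liminf_n\|T_n\|$.

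The only step requiring care is the double use of the Uniform Boundedness Principle: the first application, turning pointwise weak boundedness into norm boundedness in $Y$, needs completeness of $Y^*$, which always holds. The second application needs completeness of $X$, which is among the hypotheses. Everything else is routine.
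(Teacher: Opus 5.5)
Your proof is correct and complete. Linearity comes from uniqueness of weak limits. The two applications of the Uniform Boundedness Principle, first on the Banach space $Y^*$ and then on $X$, give $M=\sup_n\|T_n\|<\infty$. A norming functional then yields $\|Tx\|\le M\|x\|$, and in fact $\|Tx\|\le \liminf_n\|T_n\|\,\|x\|$ because $\lim_n y^*(T_nx)$ exists.

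The paper gives no proof of this proposition; it only remarks that the result is not difficult to prove. Your Banach--Steinhaus argument is the standard way to fill that in.

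An alternative route uses the Closed Graph Theorem in place of the second application of the Uniform Boundedness Principle:
\begin{itemize}
\item Each $y^*\circ T$ is a pointwise limit of $y^*\circ T_n\in X^*$, so it lies in $X^*$ by the scalar Banach--Steinhaus theorem.
\item Hence if $x_k\to x$ and $Tx_k\to y$ in norm, then $y^*(y)=y^*(Tx)$ for every $y^*\in Y^*$, so $y=Tx$.
\item Thus $T$ has closed graph and is bounded.
\end{itemize}
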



We will suppose that the sequence $(P_n)_{n\in\mathbb{N}}$ in \eqref{eq1} verify
\begin{equation}\label{eq2}
P_n P_m = \delta_{nm} P_{\min\{n,m\}}.
\end{equation}
In this case, using the expression of \eqref{eq1} we have, for $t,s\geq 0$,
\begin{align*}
(T_t T_s)(x)&=\alpha(t)\alpha(s) x + \sum_{n=1}^{\infty}\alpha(t)\alpha_n(s)P_n(x)+\sum_{n=1}^{\infty}\alpha(s)\alpha_n(t)P_n(x)\\
&\qquad\qquad+\sum_{n=1}^{\infty}\alpha_n(t)P_n\left(\sum_{j=1}^{\infty}\alpha_j(s)P_j(x)\right)\\
&=\alpha(t)\alpha(s) x+\sum_{n=1}^{\infty}\left(\alpha(t)\alpha_n(s)+\alpha(s)\alpha_n(t)+\alpha_n(t)\alpha_n(s)\right)P_n(x).
\end{align*}
Since we want the equality $(T_t T_s)(x)=T_{t+s}(x)$ for all $x\in X$, we will require that the coefficients in the expressions of $(T_t T_s)(x)$ and $T_{t+s}(x)$ be equal, that is,
\[
\begin{cases}
\alpha(t+s)=\alpha(t)\alpha(s), \ \text{with} \ \alpha(0)=1, \\
\alpha_n(t+s)=\alpha(t)\alpha_n(s)+\alpha(s)\alpha_n(t)+\alpha_n(t)\alpha_n(s),\ \text{with}\ \alpha_n(0)=0, \ \forall n\in\mathbb{N}.
\end{cases}
\]

Solutions of $\alpha(t+s)=\alpha(t)\alpha(s)$ are of the form $\alpha(t)=a^{t}$ with $a>0$. Now, for each chosen solution of $\alpha$, the solutions $\alpha_n$ are given by 
\begin{equation}\label{alphan}
\alpha_n(t)=a_n^{ t}-\alpha(t) \ \text{ for } \ a_n>0.
\end{equation}
Thus, for each choice of $\alpha$, we have an infinity of choices for $\alpha_n$ for each $n\in\mathbb{N}$. Of course, we must choose these sequences aiming at the convergence of \eqref{eq1}.

Since for each $n\in\mathbb{N}$, the projection $P_n$ is non-zero, we can take $y_n\in \mathrm{Im}(P_n)\setminus\{0\}$ and note that
\[
T_t(y_n)=\alpha(t)y_n+\alpha_n(t)P_n(y_n)=(\alpha_n(t)+\alpha(t))y_n.
\]
Thus
\[
A(y_n)=\lim_{t \to 0^+}\frac{T_t(y_n)-y_n}{t}=\lim_{t \to 0^+}\frac{\alpha_n(t)+\alpha(t)-1}{t}y_n=(\alpha'_n(0)+\alpha'(0))y_n,
\]
and so we have
\[
|\ln(a_n)|=|\alpha'_n(0)+\alpha'(0)|=\frac{\|A(y_n)\|}{\|y_n\|}.
\]
If the infinitesimal generator of $(T_t)_{t\geq 0}$ is continuous, that is, $(T_t)_{t\geq 0}$ is uniformly continuous, we have
\begin{equation}\label{eq4}
\sup_{n\in \mathbb{N}}|\ln(a_n)|\leq \|A\|.
\end{equation}

Now we can state our first partial converse of Theorem \ref{main}.

\begin{theorem}\label{onemain}
Let $X$ be an infinite-dimensional Banach space. If $X$ has quotient isomorphic to $\ell_1$, then $X$ fails to have the Lotz property; that is, there exists a semigroup \( T \) of continuous linear operators which is strongly continuous but not uniformly continuous.
\end{theorem}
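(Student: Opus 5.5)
The plan is to work directly with a quotient map onto $\ell_1$ rather than the abstract criterion of Proposition \ref{p2.4}, and to realize a semigroup of the form \eqref{eq1} with $\alpha\equiv 1$ (that is, $a=1$) and rank-one operators $P_n$. Let $Q:X\to\ell_1$ be a continuous linear surjection, and let $(e_n)$ be the unit vector basis of $\ell_1$ with coordinate functionals $e_n^*\in\ell_\infty=\ell_1^*$. I will use $y_n:=e_n^*\circ Q\in X^*$. These satisfy $\sum_n |y_n(x)| = \|Qx\|_1\le \|Q\|\,\|x\|$ for every $x\in X$; this is exactly the kind of normal sequence that appears in Proposition \ref{p2.4}. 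By the open mapping theorem there is $C>0$ such that each $e_n$ has a preimage $u_n\in X$ with $Qu_n=e_n$ and $\|u_n\|\le C$. Moreover $\|u_n\|\ge 1/\|Q\|$, since $\|Qu_n\|=1$. By construction $y_m(u_n)=\delta_{mn}$.

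Define $P_n(x):=y_n(x)u_n$. Then $P_nP_m(x)=y_m(x)y_n(u_m)u_n=\delta_{nm}P_n(x)$, so \eqref{eq2} holds. Following \eqref{alphan} with $a=1$ and $a_n=e^{-n}$, I set
\[
T_t(x)=x+\sum_{n=1}^{\infty}\bigl(e^{-nt}-1\bigr)y_n(x)u_n .
\]
The series converges absolutely in norm, because $\sum_n|e^{-nt}-1|\,|y_n(x)|\,\|u_n\|\le C\|Q\|\,\|x\|$. Hence each $T_t$ is a bounded operator with $\|T_t\|\le 1+C\|Q\|$, and Proposition \ref{p3.7} is not even needed. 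The computation preceding \eqref{alphan} then gives $T_0=I$ and $T_{t+s}=T_tT_s$. I will redo it directly, using $P_nP_m=\delta_{nm}P_n$ and $(e^{-nt}-1)(e^{-ns}-1)+(e^{-nt}-1)+(e^{-ns}-1)=e^{-n(t+s)}-1$.

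For strong continuity, fix $x$. We have
\[
\|T_tx-x\|\le C\sum_n |e^{-nt}-1|\,|y_n(x)|.
\]
Each term tends to $0$ as $t\to0^+$ and is dominated by $2|y_n(x)|$, which is summable. So dominated convergence for series gives $T_tx\to x$.

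For the failure of uniform continuity, note that $T_tu_n=e^{-nt}u_n$. Taking $t=1/n$ gives
\[
\|T_{1/n}-I\|\ge \frac{\|T_{1/n}u_n-u_n\|}{\|u_n\|}=1-e^{-1}\quad\text{for all } n .
\]
Hence $\|T_t-I\|\not\to0$ as $t\to 0^+$. Equivalently, the generator satisfies $Au_n=-nu_n$, which contradicts \eqref{eq4}.

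The only step I regard as a genuine obstacle is obtaining a \emph{bounded} biorthogonal system $(u_n,y_n)$, since the $u_n$ must be uniformly bounded for the series to define bounded operators. The open mapping theorem applied to the surjection $Q$ is what provides this. I expect it is precisely the content that Proposition \ref{p2.4} encodes, and the authors likely phrase the argument through that proposition instead.
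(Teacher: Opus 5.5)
Your proof is correct, and the semigroup you build is the paper's. It uses rank-one maps $P_n(x)=y_n(x)u_n$ from a bounded biorthogonal system with $\sum_n|y_n(x)|<\infty$, and the diagonal semigroup \eqref{eq1} with $a_n\to 0$; you specialize to $a=1$, $a_n=e^{-n}$. Strong continuity comes from absolute summability, where your dominated-convergence step replaces the paper's explicit $\epsilon$--$N$ estimate. Uniform continuity fails because $T_t$ acts on $u_n$ with exponents $\ln a_n$ that are unbounded.

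The genuine difference is how you obtain the bounded biorthogonal system. The paper first uses Proposition \ref{p2.4} to turn the $\ell_1$ quotient into a normalized $\mathrm{w}^*$-null sequence $(\varphi_n)$ with $\sum_n|\varphi_n(x)|<\infty$. It then invokes \cite[Lemma 3.1.19]{gonzalez2010tauberian} to pass to a subsequence admitting a bounded sequence $(x_n)$ with $\varphi_n(x_m)=\delta_{nm}$. You instead read the functionals directly off the quotient map, $y_n=e_n^*\circ Q$, and take the $u_n$ to be uniformly bounded lifts of $e_n$ given by the open mapping theorem. Biorthogonality is then automatic, and neither a subsequence nor either cited result is needed.

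Your route is shorter and self-contained, and it yields explicit bounds such as $\|T_t\|\le 1+C\|Q\|$ and $\|T_{1/n}-I\|\ge 1-e^{-1}$. The paper's route states its input as a sequence in $X^*$ with absolutely summable values, which is the format of its general template \eqref{eq1}--\eqref{eq2} and of the later variants. The price is that it depends on two external results.
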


\begin{proof}
By Proposition \ref{p2.4}, there exists $(\varphi_n)_{n\in\mathbb{N}}\subset S(X^*)$ such that $\sum_{n=1}^{\infty}|\varphi_n(x)|<\infty$ for all $x\in X$. By \cite[Lemma 3.1.19.]{gonzalez2010tauberian}, taking a subsequence of $(\varphi_n)_{n\in\mathbb{N}}$ if necessary, we can assume that there exists a bounded sequence $(x_n)_{n\in\mathbb{N}}$ such that $\varphi_n(x_m)=\delta_{nm}$. Consider now the sequence $(P_n)_{n\in\mathbb{N}}$ in $\mathcal{L}(X)$ given by $P_n(x)=\varphi_n(x)x_n$. We obviously have $P_n P_m=\delta_{nm} P_{\min\{n,m\}}$ and
\[
\sum_{n=1}^{\infty}\|P_n(x)\|=\sum_{n=1}^{\infty}|\varphi_n(x)|\|x_n\|\leq \left(\sup_{n\in \mathbb{N}}\|x_n\|\right)\cdot\left(\sum_{n=1}^{\infty}|\varphi_n(x)|\right)<\infty
\]
for all $x\in X$. Let $(a_n)_{n\in\mathbb{N}}$ a sequence of positive numbers such that $\lim_{n \to \infty}a_n=0$. Thus we have $\lim_{n \to \infty}|\ln (a_n)|=\infty$ and $\lim_{n \to \infty}\alpha_n(t)=-\alpha(t)$ for all $t>0$, where $\alpha_n$ is of the form given in \eqref{alphan} and $\alpha(t)=a^t$ for some $a>0$. Furthermore, there exists $K>0$ such that $|\alpha_n(t)|\leq K$ for all $t\in [0,1]$ and all $n\in\mathbb{N}$. Since for each $x\in X$ and $t>0$ the series
\[
\sum_{i=1}^{\infty}\alpha_i(t)P_i(x)
\]
is absolutely convergent, it follows that \eqref{eq1} is well defined. Now, given $\epsilon>0$ and $x\in X$, take $N\in\mathbb{N}$ such that
\[
\sum_{n=N}^{\infty}\|P_n(x)\|\leq\frac{\epsilon}{2K}.
\]
Now take $\delta>0$ such that
\[
|\alpha(t)-1|\leq \frac{\epsilon}{2N(\|x\|+1)}
\]
and
\[
|\alpha_n(t)|\leq \frac{\epsilon}{2N(\|P_n(x)\|+1)}
\]
for all $n< N$ and $t<\delta$. Therefore, for $t<\delta$ we have
\begin{align*}
\|T_t(x)-x\|&=\left\|(\alpha(t)-1)x+\sum_{n=1}^{\infty}\alpha_n(t)P_n(x)\right\|\leq |\alpha(t)-1|\cdot \|x\|+\sum_{n=1}^{\infty}|\alpha_n(t)|\cdot\|P_n(x)\|\\
&\leq|\alpha(t)-1|\cdot \|x\|+\sum_{n=1}^{N-1}|\alpha_n(t)|\cdot\|P_n(x)\|+\sum_{n=N}^{\infty}K\cdot\|P_n(x)\|\\
&\leq \frac{\epsilon\cdot\|x\|}{2N(\|x\|+1)}+\sum_{n=1}^{N-1}\frac{\epsilon\cdot\|P_n(x)\|}{2N(\|P_n(x)\|+1)}+\frac{\epsilon}{2}\leq N\cdot\frac{\epsilon}{2N}+\frac{\epsilon}{2}=\epsilon,
\end{align*}
that is, $\displaystyle \lim_{t \to 0^+}T_t(x)=x$ for each $x\in X$. We then have that $(T_t)_{t\geq 0}$ given by \eqref{eq1} is a $C_0$-semigroup. However, the fact that $\lim_{n \to \infty}|\ln(a_n)|=\infty$ guarantees us that $(T_t)_{t\geq 0}$ is not uniformly continuous, due to the inequality seen in \eqref{eq4}.
\end{proof}

In fact, in the above theorem we need something weaker than having a quotient isomorphic to $\ell_1$, which is the existence of an unconditional Schauder decomposition.

It is well known that every weakly sequentially complete Banach space admits a separable quotient (as it is either reflexive or contains a copy of $\ell_1$). If such a space additionally satisfies a certain property, one can establish the existence of a strongly continuous semigroup that is not uniformly continuous. This result is stated in the following theorem (Theorem \ref{t3.10}). For its proof, it is useful to recall the following characterization:

\begin{remark}\label{ob3.9}
It is known that a semigroup $(T_t)_{t\geq0}$ is strongly continuous if and only if it is weakly continuous (see \cite{pazy1983}), that is,
\[
\lim_{t \to 0^+}\varphi(T_t(x)-x)=0
\]
for all $(x,\varphi)\in X\times X^*$.
\end{remark}

\begin{theorem}\label{t3.10}
Let $X$ be a weakly sequentially complete Banach space. If there exists a sequence of projections $(P_n)_{n\in\mathbb{N}}\subset\mathcal{L}(X)\setminus\{0\}$ such that $P_n P_m=\delta_{nm}P_{\min\{n,m\}}$ and
\[
\sum_{n=1}^{\infty}|\varphi(P_n(x))|<\infty
\]
for all $(x,\varphi)\in X\times X^*$, then $X$ fails to have the Lotz property.
\end{theorem}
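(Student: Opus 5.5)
We follow the template of Theorem \ref{onemain}: define
\[
T_t(x)=\alpha(t)x+\sum_{n=1}^{\infty}\alpha_n(t)P_n(x),
\]
with $\alpha(t)=a^t$, $\alpha_n(t)=a_n^t-a^t$ as in \eqref{alphan}, and $a_n\to 0$ (for instance $a_n=1/n$, and $a=1$ for simplicity). The series now converges only weakly, and weak sequential completeness supplies the limit.

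\textbf{Step 1: $T_t$ is a well-defined bounded operator.} Fix $t\ge0$, $x\in X$ and $\varphi\in X^*$. Since $|\alpha_n(t)|\le K$ uniformly in $n$ and $t\in[0,1]$ (indeed for all $t\ge 0$ when $a_n\le 1$ and $a=1$), the hypothesis gives
\[
\sum_n|\alpha_n(t)\varphi(P_nx)|\le K\sum_n|\varphi(P_nx)|<\infty .
\]
Hence the partial sums $S_N(x)=\sum_{n\le N}\alpha_n(t)P_nx$ are weakly Cauchy. By weak sequential completeness they converge weakly to some element of $X$, and Proposition \ref{p3.7} shows that the limit operator is continuous.

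\textbf{Step 2: semigroup law.} Because $P_k$ is weak-to-weak continuous, $P_k\bigl(\sum_j\alpha_j(s)P_jx\bigr)=\alpha_k(s)P_kx$ by \eqref{eq2}. With this, the coefficient computation preceding \eqref{alphan}, carried out with weak limits, gives $T_tT_s=T_{t+s}$. Here we pair with $\varphi$ and use the fact that $\varphi\circ T_t$ and $\varphi\circ P_k$ again lie in $X^*$, so every series is scalar and absolutely summable.

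\textbf{Step 3: strong continuity.} By Remark \ref{ob3.9} it suffices to prove weak continuity. For fixed $x$ and $\varphi$,
\[
\varphi(T_tx-x)=(\alpha(t)-1)\varphi(x)+\sum_n\alpha_n(t)\varphi(P_nx).
\]
We repeat the $\epsilon/2$ argument of Theorem \ref{onemain}, now with scalars. Choose $N$ so that the tail $\sum_{n\ge N}|\varphi(P_nx)|$ is below $\epsilon/(2K)$, then use $\alpha_n(t)\to0$ as $t\to0^+$ for the finitely many $n<N$. (Alternatively, dominated convergence in $n$ applies directly, with dominating sequence $K|\varphi(P_nx)|$.)

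\textbf{Step 4: failure of uniform continuity.} Since $P_n\ne0$, pick $y_n\in\mathrm{Im}(P_n)\setminus\{0\}$. Then $P_m y_n=\delta_{mn}y_n$, so $T_ty_n=a_n^ty_n$ and $Ay_n=\ln(a_n)y_n$. By \eqref{eq4}, uniform continuity would force $\sup_n|\ln a_n|\le\|A\|$, which contradicts $a_n\to0$.

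The main obstacle is Steps 1–2. We must justify that the weak limits exist, which is exactly where weak sequential completeness is needed. We must also justify interchanging $P_k$ or $T_t$ with the weakly convergent series. The plan is to do everything after pairing with an arbitrary functional, using that bounded operators are weakly continuous.
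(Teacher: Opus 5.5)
Your proposal is correct and follows essentially the same route as the paper. Both use the same operator $T_t$, weak sequential completeness together with Proposition \ref{p3.7} for boundedness, Remark \ref{ob3.9} for strong continuity, and the eigenvectors in $\mathrm{Im}(P_n)$ to show the generator is unbounded. The only variation is in the semigroup law: you pair with functionals and use weak continuity of $P_k$, whereas the paper proves a finite identity for the truncations $T_t^{(n)}T_s^{(m)}$ and then passes to iterated weak limits; both arguments are valid.
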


\begin{proof}
Consider $\alpha(t)=a^{t}$ for any $a\in (0,\infty)$ and for each $n\in\mathbb{N}$ take $\alpha_n(t)=\left(\frac{1}{n}\right)^{t}-\alpha(t)$. For each $n\in\mathbb{N}$ consider $T_t^{(n)}\in \mathcal{L}(X)$ given by $T_t^{(n)}(x)=\alpha(t)x+\sum_{i=1}^{n}\alpha_i(t)P_i(x)$. Let us see that the sequence $(T_t^{(n)}(x))_{n\in\mathbb{N}}$ converges weakly for all $x\in X$ and all $t\geq 0$. Since the sequence $(\alpha_n(t))_{n\in\mathbb{N}}$ is bounded, we have for $\varphi\in X^*$ that
\[
\sum_{i=1}^{\infty}|\alpha_i(t)\varphi\left(P_i(x)\right)|<\infty.
\]
Therefore $\sum_{i=1}^{\infty}\alpha_i(t)\varphi\left(P_i(x)\right)$ converges in $\mathbb{K}$. Thus
\[
\lim_{n \to \infty}\varphi\left(T_t^{(n)}(x)\right)=\lim_{n \to \infty}\left(\alpha(t)\varphi(x)+\sum_{i=1}^{n}\alpha_i(t)\varphi(P_i(x))\right)
\]
exists for any $\varphi\in X^*$. Since $X$ is weakly sequentially complete, we can conclude that there exists $T_t(x)\in X$ such that the sequence $(T_t^{(n)}(x))_{n\in\mathbb{N}}$ converges weakly to $T_t(x)$. Proposition \ref{p3.7} assures us that $T_t:X\longrightarrow X$ is continuous.

Let us now see that
\[
T_t^{(n)} T_s^{(m)}=T_{t+s}^{(\min\{n,m\})}+\alpha(t)\left(T_{s}^{(m)}-T_{s}^{(\min\{n,m\})}\right)+\alpha(s)\left(T_{t}^{(n)}-T_{t}^{(\min\{n,m\})}\right)
\]
for any $t,s\geq 0$ and $n,m\in\mathbb{N}$. In fact, given $x\in X$, it follows that
\begin{align*}
&T_t^{(n)}\left(T_s^{(m)}(x)\right)=\alpha(t)T_s^{(m)}(x)+\sum_{i=1}^{n}\alpha_i(t)P_i\left(T_s^{(m)}(x)\right)\\
&=\alpha(t)\left(\alpha(s)x+\sum_{j=1}^{m}\alpha_j(s)P_j(x)\right)+\sum_{i=1}^{n}\alpha_i(t)P_i\left(\alpha(s)x+\sum_{j=1}^{m}\alpha_j(s)P_j(x)\right)\\
&=\alpha(t)\left(\alpha(s)x+\sum_{j=1}^{m}\alpha_j(s)P_j(x)\right)+\alpha(s)\left(\sum_ {i=1}^{n}\alpha_i(t)P_i(x)\right)+\sum_{i=1}^{\min\{n,m\}}\alpha_i(t)\alpha_i(s)P_i(x)\\
&=\alpha(t+s)x+\sum_{i=1}^{\min\{n,m\}}\left(\alpha(t)\alpha_i(s)+\alpha(s)\alpha_i(t)+\alpha_i(t)\alpha_i(s)\right)P_i(x)\\
&\qquad+\alpha(t)\left(\sum_{i=\min\{n,m\}+1}^{m}\alpha_i(s)P_i(x)\right)+\alpha(s)\left(\sum_{i=\min\{n,m\}+1}^{n}\alpha_i(t)P_i(x)\right)\\
&=T_{t+s}^{(\min\{n,m\})}(x)+\alpha(t)\left(T_{s}^{(m)}(x)-T_{s}^{(\min\{n,m \})}(x)\right)+\alpha(s)\left(T_{t}^{(n)}(x)-T_{t}^{(\min\{n,m\})}(x)\right).
\end{align*}
Thus, for each $x\in X$ and $m\in\mathbb{N}$ we have
\begin{align*}
T_t\left(T_s^{(m)}(x)\right) &=\lim\limits_{n\rightarrow\infty}T_t^{(n)}\left(T_s^{(m)}(x)\right)\\
&=\lim\limits_{n\rightarrow\infty}\left[T_{t+s}^{(\min\{n,m\})}(x)+\alpha(t)\left(T_{s}^{(m)}(x)-T_ {s}^{(\min\{n,m\})}(x)\right)\right. \\
& \qquad\qquad \left. +\alpha(s)\left(T_{t}^{(n)}(x)-T_{t}^{(\min\{n,m\})}(x)\right)\right]\\
&=T_{t+s}^{(m)}(x)+\alpha(s)\left(T_t(x)-T_t^{(m)}(x)\right).
\end{align*}
If we make $m\rightarrow\infty$ in the last equality, we obtain $T_t\left(T_s(x)\right)=T_{t+s}(x)$. We then conclude that $(T_t)_{t\geq0}$ is a semigroup of continuous operators of $X$.

Now we will show that $(T_t)_{t\geq0}$ is strongly continuous using Remark \ref{ob3.9}. In fact, for $\varphi\in X^*$ and $x\in X$, given $\epsilon>0$ take $N\in \mathbb{N}$ such that
\[
\sum_{i=N+1}^{\infty}|\varphi(P_i(x))|<\frac{\epsilon}{2K} \ \text{ with } \ K=\sup\limits_{t\in[0,1]}|\alpha(t)|+1.
\]
Now take $0<\delta\leq 1$ such that for $0\leq t<\delta$ we have
\[
|\alpha(t)-1||\varphi(x)|+\sum_{i=1}^{N}|\alpha_i(t)||\varphi(P_i(x))|<\frac{\epsilon}{2}.
\]
Thus, for $0\leq t<\delta$ we have
\begin{align*}
|\varphi\left(T_t(x)-x\right)|&=\left|(\alpha(t)-1)\varphi(x)+\sum_{i=1}^{\infty}\alpha_i(t)\varphi(P_i(x))\right|\\
&\leq|\alpha(t)-1||\varphi(x)|+\sum_{i=1}^{N}|\alpha_i(t)||\varphi(P_i(x))|+\sum_{i=N+1}^{\infty}|\alpha_i(t)||\varphi(P_i(x))|\\
&\leq|\alpha(t)-1||\varphi(x)|+\sum_{i=1}^{N}|\alpha_i(t)||\varphi(P_i(x))|+\sum_{i=N+1}^{\infty}K|\varphi(P_i(x))|\\
&<\frac{\epsilon}{2}+\frac{\epsilon}{2}=\epsilon,
\end{align*}
that is, we verify that $\lim\limits_{t\rightarrow 0}\varphi(T_t(x)-x)=0$ and hence $(T_t)_{t\geq0}$ is indeed a strongly continuous semigroup. It is easy to check that $\mathrm{Im}(P_n)\subset {D}\left(A\right)$ (where $A:{D}(A)\longrightarrow X$ is the infinitesimal generator of the semigroup $(T_t)_{t\geq0}$) for all $n\in\mathbb{N}$ and that $A(P_n(x))=\ln\left(\frac{1}{n}\right)P_n(x)$ for all $x\in X$. This shows us that $(T_t)_{t\geq0}$ is not uniformly continuous.
\end{proof}

\begin{example}
Consider $\ell_p$ for $1<p<\infty$ and take the biorthogonal system $(e_n,e_n^*)_{n\in\mathbb{N}}\subset \ell_p\times (\ell_p)^*=\ell_p\times \ell_{p^*}$, where $(e_n)_{n\in\mathbb{N}}$ is the canonical basis of $\ell_{p}$ and $p^*$ is the H\"older conjugate of $p$, that is, $1/p+1/p^*=1$. Given $(x,y)\in \ell_{p}\times \ell_{p^*}$, we have
\[
\sum_{i=1}^{\infty}|y(e_i)e_i^*(x)|=\sum_{i=1}^{\infty}|y_ix_i|\leq \|x\|_p\|y\|_{p^*}<\infty.
\]
Since $\ell_p$ is reflexive, and every reflexive space is weakly sequentially complete, it follows that $\ell_p$ admits a strongly continuous semigroup that is not uniformly continuous.
\end{example}

\begin{example}
Every Banach space with unconditional basis admits a sequence of orthogonal projections satisfying the condition of Theorem \ref{t3.10}. Indeed, let $(e_n,e_n^*)_{n\in\mathbb{N}}\subset X\times X^*$ be an unconditional basis for $X$. Given $(x,\varphi)\in X\times X^*$, we have for any bijection $\sigma:\mathbb{N}\longrightarrow\mathbb{N}$ that
\[
\sum_{n=1}^{\infty}\varphi\left(e_{\sigma(n)}^*(x)e_{\sigma(n)}\right)=\varphi\left(\sum_{n=1}^{\infty}e_{\sigma(n)}^*(x)e_{\sigma(n)}\right)=\varphi(x),
\]
that is, the series $\sum_{n=1}^{\infty}\varphi\left(e_{n}^*(x)e_{n}\right)$ is unconditionally convergent. Since unconditionally convergent series in $\mathbb{K}$ are exactly those that are absolutely convergent, we conclude that $\sum_{n=1}^{\infty}\left|\varphi\left(e_{n}^*(x)e_{n}\right)\right|<\infty$ for any $(x,\varphi)\in X\times X^*$. It suffices then to take $P_n(x)=e_n^*(x)e_n$.
\end{example}

\begin{example}
It is known that $\ell_1$ is a weakly sequentially complete space (since it is a Schur space). Since the basis of $\ell_1$ is unconditional, it follows from the previous example and Theorem \ref{t3.10} that $\ell_1$ admits a $C_0$-semigroup that is not uniformly continuous.
\end{example}

We will now look at the notion of Schauder decomposition, and how such decompositions allow us to construct strongly continuous semigroups that are not uniformly continuous. The Schauder decomposition in a Banach space is a powerful tool in Functional Analysis, used to represent elements of the space as combinations of indexed sequences of elements of the space.

The interesting thing about Schauder decompositions is that we can have a greater variety of spaces with such decompositions, that is, we are not limited to only separable spaces as in the case of Schauder basis. However, there are still spaces that do not admit Schauder decompositions (not even imposing separability on the space), as is the case of $\ell_\infty$. Just as in the case of unconditional basis, we also have the concept of unconditional Schauder decomposition: The decomposition $(P_n(X))_{n\in\mathbb{N}}$ is said to be an \textit{unconditional Schauder decomposition} if the series $\sum_{n=1}^\infty P_n(x)$ converges unconditionally for any $x\in X$.

Just like Schauder basis, Schauder decompositions have their characterizations in an analogous way. Here we will state only one characterization that will be useful to us.

\begin{theorem}
Let $X$ be a Banach space. The following statements are equivalent:
\begin{itemize}
\item[(1)] $(P_n(X))_{n\in\mathbb{N}}$ is an unconditional Schauder decomposition for $X$;
\item[(2)] There exists a constant $1\leq M<\infty$ such that
\[
\left\|\sum_{i=1}^{\infty}\alpha_iP_i(x)\right\|\leq M\cdot \sup_{n\in \mathbb{N}}|\alpha_n|\cdot\left\|\sum_{i=1}^{\infty}P_i(x)\right\| \]
for all $x\in X$ and $(\alpha_n)_{n\in\mathbb{N}}\in c_{00}$.
\end{itemize}
\end{theorem}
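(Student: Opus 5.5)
The plan is to prove both implications by passing from the scalar multipliers \(\alpha_n\) to signs and back, using only the uniform boundedness principle and elementary convexity. Throughout I use the convention that \(\sum_{i=1}^\infty P_i(x)=x\) (the \(P_n\) are the coordinate projections of the decomposition), so \(\left\|\sum_i P_i(x)\right\|=\|x\|\). For \((\alpha_n)\in c_{00}\) the sum \(\sum_i\alpha_iP_i(x)\) is finite, so the inequality in (2) is a statement about finitely supported multipliers.

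For \((1)\Rightarrow(2)\), I would first recall that unconditional convergence of \(\sum_n P_n(x)\) is equivalent to convergence of \(\sum_n \varepsilon_n P_n(x)\) for every choice of signs \(\varepsilon_n\in\{-1,1\}\) (or unimodular scalars). More usefully, it is equivalent to convergence of \(\sum_{n\in A}P_n(x)\) for every \(A\subset\mathbb{N}\). For each finite \(A\subset\mathbb{N}\) I would define \(Q_A=\sum_{n\in A}P_n\in\mathcal{L}(X)\). For fixed \(x\), the set \(\{Q_A(x): A \text{ finite}\}\) is bounded. This follows from the Cauchy criterion for unconditional convergence: there is \(N\) such that \(\|\sum_{n\in B}P_n(x)\|\le 1\) whenever \(B\) is finite with \(\min B>N\), and splitting \(A=(A\cap[1,N])\cup(A\setminus[1,N])\) gives the bound. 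The Banach--Steinhaus theorem then yields \(C:=\sup_A\|Q_A\|<\infty\).

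Next, for a real finitely supported sequence with \(\sup_n|\alpha_n|\le1\), I would write \(\sum\alpha_iP_i\) as a combination of the \(Q_A\). The standard way is the layer-cake representation. Split \(\alpha=\alpha^+-\alpha^-\). Each nonnegative finitely supported \(\beta\) with \(\beta\le1\) equals \(\int_0^1 \mathbf{1}_{\{\beta> s\}}\,ds\), so \(\sum\beta_iP_i=\int_0^1 Q_{\{\beta>s\}}\,ds\), a finite convex combination of the \(Q_A\). This gives \(\|\sum\alpha_iP_i(x)\|\le 2C\|x\|\) for real scalars. In the complex case I would split into real and imaginary parts and get \(4C\). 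Scaling then gives (2) with \(M=\max\{1,4C\}\).

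For \((2)\Rightarrow(1)\), fix \(x\) and a permutation, or equivalently verify the Cauchy criterion for subseries. Since \(\sum_iP_i(x)=x\) converges, for \(\epsilon>0\) there is \(N\) with \(\|\sum_{i=N}^{m}P_i(x)\|\) small for all \(m\ge N\). I would like to bound \(\|\sum_{i\in B}P_i(x)\|\) for finite \(B\subset[N,\infty)\). Set \(y=\sum_{i=N}^{\max B}P_i(x)\). Then \(P_i(y)=P_i(x)\) for \(N\le i\le\max B\) and \(P_i(y)=0\) otherwise, using \(P_nP_m=\delta_{nm}P_n\). Applying (2) to \(y\) with \(\alpha=\mathbf{1}_B\) gives \(\|\sum_{i\in B}P_i(x)\|\le M\|y\|\le M\epsilon\). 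So every subseries is Cauchy, hence converges, which is unconditional convergence.

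The main obstacle I anticipate is the bookkeeping in the first direction, namely passing from sets (or signs) to general bounded multipliers with a clean constant. The layer-cake convexity argument is what I would try first, since it avoids any appeal to extreme points of the unit ball of \(\ell_\infty^n\).
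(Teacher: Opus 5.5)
The paper gives no proof of this statement. It is quoted as the standard characterization of unconditional decompositions, the analogue of the one for unconditional bases, and is used only to justify Theorem~\ref{t14}. So there is no argument of the authors to compare with. Judged on its own, your proof is correct and complete, and it is essentially the textbook argument.

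In $(1)\Rightarrow(2)$:
\begin{itemize}
\item The unconditional Cauchy criterion gives pointwise boundedness of the suppression operators $Q_A=\sum_{n\in A}P_n$.
\item Banach--Steinhaus then gives $C=\sup_A\|Q_A\|<\infty$.
\item Your layer-cake identity is a genuine finite sub-convex combination. Let $0=b_0<b_1<\dots<b_k\le 1$ be $0$ together with the distinct nonzero values of $\beta$. Then $\sum_i\beta_iP_i=\sum_{j=1}^k(b_j-b_{j-1})Q_{\{\beta\ge b_j\}}$, with coefficients summing to $b_k\le 1$. So your constants $2C$ (real scalars) and $4C$ (complex scalars) are right.
\end{itemize}
Textbook treatments usually pass instead through the sign operators $\sum_i\varepsilon_iP_i$, using the fact that sign vectors are the extreme points of $[-1,1]^n$ in the real case. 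Your formula is a hands-on version of that convexity step, phrased with the suppression constant. Any loss of a constant factor is immaterial, since $M$ is not specified.

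In $(2)\Rightarrow(1)$, the essential point is that you apply (2) to the tail block $y=\sum_{i=N}^{\max B}P_i(x)$ rather than to $x$. That makes the right-hand side $M\|\sum_iP_i(y)\|=M\|y\|$ small. Only $\{0,1\}$-valued multipliers are needed for this direction.

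Finally, you were right to state the standing assumption that the $P_n$ are the coordinate projections of a Schauder decomposition, so that $\sum_iP_i(x)=x$. It is implicit in the statement and cannot be dropped. For example, take $c_0\oplus\mathbb{K}$ with the max norm and $P_n(x,\lambda)=(x_ne_n,0)$. These projections satisfy $P_nP_m=\delta_{nm}P_n$ and (2) with $M=1$, yet $(P_n(X))$ does not decompose the space.
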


Using arguments similar to those in Theorem \ref{t3.10}, we can establish Theorem \ref{t14} below. Indeed, under the assumptions of Theorem \ref{t14}, and considering the functions $\alpha$ and $\alpha_n$ as defined in Theorem \ref{t3.10}, the convergence of \eqref{eq1} in the norm topology for all $x\in X$ follows from the inequality below, which is ensured by the unconditional Schauder decomposition:
\[
\left\|\sum_{i=1}^{\infty}\alpha_i(t) P_i(x)\right\|\leq M\cdot \sup_{n\in \mathbb{N}}|\alpha_n(t)|\cdot\left\|\sum_{i=1}^{\infty}P_i(x)\right\|.
\]

\begin{theorem}\label{t14}
Let $X$ be a Banach space with a\textcolor{black}{n} unconditional Schauder decomposition. Then $X$ fails to have the Lotz property.
\end{theorem}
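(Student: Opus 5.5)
Let $(P_n)_{n\in\mathbb{N}}$ be the coordinate projections of the unconditional Schauder decomposition, so $P_nP_m=\delta_{nm}P_n$, $X_n:=P_n(X)\neq\{0\}$ (discarding trivial summands) and $x=\sum_n P_n(x)$ for every $x$. The plan is to build, as in \eqref{eq1}, a diagonal semigroup whose eigenvalues on the blocks $X_n$ are unbounded. The simplest choice is to take $\alpha\equiv 0$ in the construction, or equivalently to take $a=1$ and absorb the identity using $x=\sum_nP_n(x)$. I would define
\[
T_t(x)=\sum_{n=1}^{\infty}\left(\tfrac1n\right)^{t}P_n(x),\qquad t\ge 0 .
\]
Here the coefficients $\beta_n(t)=n^{-t}$ lie in $(0,1]$, so $T_0=I$ by the decomposition.

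First, well-definedness and boundedness. The characterization (2) of unconditional decompositions gives, for finitely supported coefficients, $\|\sum_{i\le N}\beta_i(t)P_i(x)\|\le M\|x\|$. Unconditional convergence of $\sum_n P_n(x)$ also implies convergence of $\sum_n\beta_nP_n(x)$ for any bounded scalar sequence $(\beta_n)$; this is the standard fact that unconditionally convergent series are bounded-multiplier convergent. Applying the estimate to tails shows the partial sums are Cauchy. Hence $T_t\in\mathcal{L}(X)$ with $\|T_t\|\le M$. The semigroup law follows by applying $P_i$ termwise (justified by continuity of $P_i$ and norm convergence): $P_iT_s(x)=i^{-s}P_i(x)$, so $T_tT_s(x)=\sum_i i^{-t}i^{-s}P_i(x)=T_{t+s}(x)$.

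Second, strong continuity, which I expect to be the main point, though it is routine given uniform boundedness. Fix $x$ and $\epsilon>0$, and choose $N$ with $\|\sum_{n>N}P_n(x)\|<\epsilon$. Write
\[
T_t(x)-x=\sum_{n\le N}(n^{-t}-1)P_n(x)+\sum_{n>N}(n^{-t}-1)P_n(x).
\]
The first sum tends to $0$ as $t\to0^+$ because it is finite. The second is controlled by the multiplier inequality applied to the tail vector $y=\sum_{n>N}P_n(x)$, whose blocks are $P_n(y)=P_n(x)$ for $n>N$. Since $|n^{-t}-1|\le 1$, its norm is at most $M\|y\|<M\epsilon$. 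Thus $\limsup_{t\to0^+}\|T_t(x)-x\|\le M\epsilon$, giving a $C_0$-semigroup. Alternatively one can use Remark \ref{ob3.9}, but the norm argument is direct.

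Finally, failure of uniform continuity. For $y\in X_n\setminus\{0\}$ we have $T_t(y)=n^{-t}y$, so $y\in D(A)$ and $Ay=-\ln(n)\,y$. If $T$ were uniformly continuous, $A$ would be bounded, and as in \eqref{eq4} we would get $\ln n\le\|A\|$ for all $n$, which is absurd. Hence $X$ fails the Lotz property. The only care needed is that infinitely many $X_n$ are nonzero, which holds since the decomposition is genuinely infinite (otherwise $X$ would be a finite sum of the $X_n$ and the statement should be read for infinite decompositions).
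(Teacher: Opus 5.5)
Your proof is correct and follows the paper's route. Since $x=\sum_n P_n(x)$, your operator $\sum_n n^{-t}P_n(x)$ coincides with the paper's $\alpha(t)x+\sum_n\alpha_n(t)P_n(x)$ for $\alpha_n(t)=(1/n)^t-\alpha(t)$ and any choice of $a$. Norm convergence comes from the unconditional multiplier inequality, and non-uniform continuity from $Ay=-\ln(n)\,y$ on $X_n$. Your explicit tail estimate for strong continuity and your caveat that infinitely many $X_n$ must be nonzero supply details the paper leaves to ``arguments similar to those in Theorem \ref{t3.10}''.
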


In light of the proof of Theorem \ref{t3.10}, a natural question is whether Theorem \ref{t14} can be derived under weaker assumptions. However, the condition in Theorem \ref{t3.10} is precisely equivalent to the unconditionality of the decomposition, as shown below.

First, recall that the concept of a Schauder decomposition can be defined in arbitrary topological vector spaces, and that, in particular, as shown in \cite{pazy1983}, the Schauder decompositions of a Banach space $X$ with respect to the strong and weak topologies coincide.

\begin{proposition}
A Schauder decomposition $(P_n(X))_{n\in\mathbb{N}}$ is unconditional if, and only if,
\[
\sum_{n=1}^{\infty}|\varphi(P_n(x))|<\infty
\]
for all $(x,\varphi)\in X\times X^*$.
\end{proposition}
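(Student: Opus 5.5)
Both directions rest on the fact that a series of scalars converges unconditionally if and only if it converges absolutely. We combine this with the principle that weak and strong Schauder decompositions coincide (Orlicz--Pettis type reasoning, as recalled just before the statement). Here the Schauder decomposition is written $x=\sum_{n}P_n(x)$, with $P_n$ the coordinate projections onto $X_n=P_n(X)$.

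\emph{Necessity.} Suppose the decomposition is unconditional and fix $(x,\varphi)\in X\times X^*$. For every bijection $\sigma:\mathbb{N}\to\mathbb{N}$ the series $\sum_n P_{\sigma(n)}(x)$ converges in norm. Its limit is $x$, because of the uniqueness of the expansion and the continuity of each $P_m$: applying $P_m$ to the partial sums leaves only one term. Applying the continuous functional $\varphi$ gives
\[
\sum_{n=1}^{\infty}\varphi\bigl(P_{\sigma(n)}(x)\bigr)=\varphi(x)
\]
for every permutation $\sigma$. Hence the scalar series $\sum_n\varphi(P_n(x))$ converges unconditionally, and therefore absolutely, exactly as in the unconditional basis example above.

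\emph{Sufficiency.} Assume $\sum_n|\varphi(P_n(x))|<\infty$ for all $\varphi\in X^*$. Fix $x$ and a permutation $\sigma$; we must show that $\sum_n P_{\sigma(n)}(x)$ converges in norm. The hypothesis says precisely that $\sum_n P_n(x)$ is weakly unconditionally Cauchy in $X$. In particular, every subseries $\sum_{n\in M}P_n(x)$ is weakly Cauchy. We want the stronger fact that every subseries converges weakly to some element of $X$, since Orlicz--Pettis then gives norm unconditional convergence. For a subset $M\subset\mathbb{N}$, the candidate limit is obtained from the decomposition structure itself. For finite $F$ the vectors $\sum_{n\in F}P_n(x)$ are uniformly bounded: the map $(\varphi)\mapsto(\varphi(P_n(x)))_n$ from $X^*$ into $\ell_1$ is closed, hence bounded by the closed graph theorem, so $\sup_F\|\sum_{n\in F}P_n(x)\|\le C_x$.

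Next we would show that the partial sums $\sum_{n\in M,\,n\le N}P_n(x)$ are norm Cauchy. Suppose not. Then there are disjoint blocks $F_k\subset M$ with $\|\sum_{n\in F_k}P_n(x)\|\ge\varepsilon$ and $\max F_k<\min F_{k+1}$. The block vectors $u_k=\sum_{n\in F_k}P_n(x)$ form a block sequence of the Schauder decomposition, so they are a basic sequence. They are also weakly unconditionally Cauchy, since $\sum_k|\varphi(u_k)|\le\sum_n|\varphi(P_n(x))|<\infty$. A seminormalized weakly unconditionally Cauchy basic sequence is equivalent to the unit vector basis of $c_0$ (Bessaga--Pe\l czy\'nski). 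The intended contradiction would then come from the fact that $\sum_k u_k$ converges in norm. Indeed, $\sum_n P_n(x)=x$ converges, so any block of consecutive indices has small norm in the tail, and so $u_k\to0$. This works when $M$ consists of consecutive intervals, but for general $M$ it does not, so the real argument must be different.

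This last step is the crux, and we expect it to be the main obstacle. A cleaner route is to use the basis constant $K$ of the decomposition. Write $S_N=\sum_{n\le N}P_n$, so that $\|S_N\|\le K$. We would try to show directly that for fixed $x$ the family $\{\sum_{n\in F}P_n(x)\}$ is norm Cauchy in the sense that for every $\varepsilon>0$ there is $N$ with $\|\sum_{n\in F}P_n(x)\|<\varepsilon$ whenever $F\subset(N,\infty)$ is finite. Arguing by contradiction, we extract blocks $u_k$ as above with $\|u_k\|\ge\varepsilon$ and $\sum_k|\varphi(u_k)|<\infty$. By Bessaga--Pe\l czy\'nski this produces a copy of $c_0$ spanned by blocks supported in tails. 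We would then set $y=\sum_k t_ku_k$ for suitable scalars and compare it with $x$ via the projections $P_n$. Since the decomposition is assumed to be a Schauder decomposition of the whole space, the weak limits of subseries, which would be determined coordinatewise by the $P_n$, must lie in $X$. The plan is to use the weak-decomposition equals strong-decomposition principle cited from \cite{pazy1983} to close this gap: show that subseries converge weakly, then apply Orlicz--Pettis.
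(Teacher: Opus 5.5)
Your necessity argument is correct and is the paper's. The sufficiency direction is not proved. You leave the crux open yourself, and the strategy of fixing one $x$ and extracting a contradiction from blocks $u_k$ cannot work. For a single vector, the information you use does not imply unconditional convergence; that information is that $\sum_n P_n(x)$ is weakly unconditionally Cauchy, converges in norm, and $\sup_N\|S_N\|\le K$. For instance, write $c_0=(\bigoplus_k\ell_\infty^{m_k})_{c_0}$ with $m_k\to\infty$, and concatenate the summing bases of the blocks (basis constant at most $2$). The vector whose $k$-th block is $m_k^{-1}(1,0,1,0,\dots)$ satisfies $\sum_n|\varphi(P_n(x))|\le\|\varphi\|_1$ for every $\varphi\in\ell_1$. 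Yet the subseries of its terms with positive coefficients has blocks of norm about $\tfrac12$, and it does not converge, not even weakly in $c_0$. So the copy of $c_0$ spanned by your $u_k$ yields no contradiction, and the assertion that weak limits of subseries ``must lie in $X$'' is precisely the unproved point.

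The missing idea is to use that the hypothesis holds for \emph{every} $x$, via the Banach--Steinhaus theorem. The paper works with rearrangements rather than subseries. Since $\sum_n\varphi(P_n(x))$ converges absolutely, its sum is independent of the order, so $\sum_i\varphi(P_{\sigma(i)}(x))=\varphi(x)$ for every $\varphi\in X^*$. Hence every rearranged series converges weakly to $x$, i.e.\ $(P_{\sigma(i)}(X))_i$ is a weak Schauder decomposition, and therefore a Schauder decomposition. Concretely, the operators $S^\sigma_N=\sum_{i\le N}P_{\sigma(i)}$ are pointwise bounded, because weakly convergent sequences are bounded; so $\sup_N\|S^\sigma_N\|<\infty$. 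Since $S^\sigma_N d=d$ for large $N$ on the dense subspace $\mathrm{span}\bigcup_n P_n(X)$, it follows that $S^\sigma_N x\to x$ in norm for all $x$. Equivalently, you can start from your own bound $\sup_F\|\sum_{n\in F}P_n(x)\|\le C_x$ and upgrade it by Banach--Steinhaus to $\sup_F\|\sum_{n\in F}P_n\|<\infty$. With the same density argument, this makes every subseries norm convergent, and neither Orlicz--Pettis nor Bessaga--Pe\l czy\'nski is needed.
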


\begin{proof}
If the series $\sum_{i=1}^\infty P_i(x)$ is unconditionally convergent, then the series $\sum_{i=1}^\infty \varphi(P_i(x))$ is also unconditionally convergent. Since the unconditionally convergent numerical series are exactly those that are absolutely convergent, it follows that $\sum_{i=1}^\infty |\varphi(P_i(x))|<\infty$.

Since $\sum_{i=1}^\infty |\varphi(P_i(x))|<\infty$, for any bijection $\sigma:\mathbb{N}\longrightarrow\mathbb{N}$ we have $\sum_{i=1}^\infty |\varphi(P_{\sigma(i)}(x))|<\infty$. Thus, $\sum_{i=1}^\infty P_{\sigma(i)}(x)$ converges to $x$ in the weak topology, and this tells us that $(P_{\sigma(i)}(X))_{x\in\mathbb{N}}$ is a weak Schauder decomposition, but as noted earlier, this is a Schauder decomposition.
\end{proof}

Finally, we present a result whose verification, although not straightforward, can be established by suitably adapting the techniques and arguments introduced in the preceding results.

\begin{proposition}
Let $X$ be a Banach space. If $X$
\begin{itemize}
\item[(a.)] admits a quotient isomorphic to $\ell_1$; or
\item[(b.)] is weakly sequentially complete and there exists a generalized biorthogonal system $(X_n,P_n)_{n\in\mathbb{N}}$ satisfying $\sum_{n=1}^{\infty}|\varphi\left(P_n(x)\right)|<\infty$ for all $(x,\varphi)\in X\times X^*$,
\end{itemize}
then $X$ admits an unconditional Schauder decomposition. In particular, $X$ fails to have the Lotz property.
\end{proposition}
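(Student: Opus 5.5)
The plan is to build the decomposition directly from the projections $P_n$ and to absorb whatever the $P_n$ miss into one extra summand. In both cases I will have a sequence $(P_n)_{n\in\mathbb{N}}\subset\mathcal{L}(X)\setminus\{0\}$ with $P_nP_m=\delta_{nm}P_{\min\{n,m\}}$ for which the series $\sum_n P_n(x)$ converges unconditionally in norm for every $x\in X$. Put
\[
Q(x)=\sum_{n=1}^\infty P_n(x).
\]
Proposition \ref{p3.7} gives $Q\in\mathcal{L}(X)$. Using the orthogonality relations and the continuity of each $P_m$, we get $P_mQ=QP_m=P_m$, hence $Q^2=Q$. Set $X_0:=\ker Q=(I-Q)(X)$ and $X_n:=P_n(X)$ for $n\ge 1$. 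Every $x$ then satisfies
\[
x=(I-Q)x+\sum_{n=1}^\infty P_n(x).
\]
This representation is unique: applying $P_m$ and $I-Q$ to any representation $x=y_0+\sum_n y_n$ with $y_0\in X_0$ and $y_n\in X_n$ recovers $y_m=P_m(x)$ and $y_0=(I-Q)x$. The coordinate projections $I-Q$ and $P_n$ are continuous. Unconditionality is inherited from $\sum_n P_n(x)$, since adding one fixed term does not affect it. After re-indexing ($X_0$ first, dropped if it is $\{0\}$), this is an unconditional Schauder decomposition of $X$, and Theorem \ref{t14} gives the final clause about the Lotz property.

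\emph{Case (a).} Follow the beginning of the proof of Theorem \ref{onemain}. Proposition \ref{p2.4} and \cite[Lemma 3.1.19.]{gonzalez2010tauberian} provide $\varphi_n\in S(X^*)$ and a bounded sequence $(x_n)$ with $\varphi_n(x_m)=\delta_{nm}$ and $\sum_n|\varphi_n(x)|<\infty$ for all $x$. Take $P_n(x)=\varphi_n(x)x_n$. These are nonzero, satisfy the orthogonality relations, and
\[
\sum_n\|P_n(x)\|\le \Bigl(\sup_n\|x_n\|\Bigr)\sum_n|\varphi_n(x)|<\infty .
\]
So $\sum_n P_n(x)$ is absolutely, and hence unconditionally, norm convergent, and the general scheme applies.

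\emph{Case (b).} The hypothesis says exactly that $\sum_n P_n(x)$ is weakly unconditionally Cauchy for each $x$. The main obstacle is upgrading this to unconditional norm convergence; I expect it to be the hardest step. I will use the Bessaga--Pe\l czy\'nski theorem: in a Banach space containing no copy of $c_0$, every weakly unconditionally Cauchy series is unconditionally norm convergent. A weakly sequentially complete space cannot contain $c_0$, because the unit vector basis summing functions $\sum_{i\le n}e_i$ is weakly Cauchy in $c_0$ without a weak limit in it. If one prefers to avoid quoting that theorem, the fallback is more hands-on. Every subseries $\sum_{n\in M}P_n(x)$ is weakly Cauchy, so by weak sequential completeness it converges weakly. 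A weakly subseries convergent series is norm subseries convergent by Orlicz--Pettis, and that is unconditional norm convergence. Either route puts us back in the general scheme and completes the argument.
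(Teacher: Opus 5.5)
Your proof is correct. It also supplies an argument the paper never gives: the paper only asserts that the proposition follows by ``suitably adapting'' its earlier techniques. Compared with those techniques, you add the two ingredients they are missing.

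First, the projections $P_n$ of Theorems \ref{onemain} and \ref{t3.10} need not sum to the identity, so by themselves they never decompose $X$. Your complementary summand $X_0=\ker Q$ repairs this, where $Q=\sum_n P_n$ is a bounded idempotent with $P_mQ=QP_m=P_m$. Your checks of existence, uniqueness, closedness of the summands and continuity of the coordinate maps are complete.

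Second, in case (b) the proof of Theorem \ref{t3.10} only extracts weak convergence from weak sequential completeness. You upgrade the weakly unconditionally Cauchy series $\sum_n P_n(x)$ to an unconditionally norm convergent one. You do this either by Bessaga--Pe\l czy\'nski, since a weakly sequentially complete space contains no $c_0$, or by weak subseries convergence plus Orlicz--Pettis. Both routes are valid, and this step is what lets your general scheme apply in case (b).

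Three small points:
\begin{itemize}
\item You tacitly take the $P_n$ in (b) to be nonzero, as Theorem \ref{t3.10} does. This is genuinely needed: with all $P_n=0$ the hypothesis holds in every finite-dimensional space, yet every finite-dimensional space has the Lotz property.
\item For the final clause you can cite Theorems \ref{onemain} and \ref{t3.10} directly instead of Theorem \ref{t14}. Their hypotheses are exactly (a) and (b), and both are fully proved in the paper, whereas Theorem \ref{t14} is only sketched.
\item Case (a) has a shortcut that avoids the biorthogonalization lemma. Since $\ell_1$ is projective, a quotient map $q\colon X\to\ell_1$ has a bounded linear lifting. Hence $X\cong\ker q\oplus\ell_1$, and the unit vector basis of $\ell_1$ gives the unconditional decomposition immediately.
\end{itemize}
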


\section{Another sufficient or equivalent conditions}\label{sec5}

In this section, we establish several new sufficient or equivalent conditions for the SQP.

\begin{lemma}\label{t2.8}
Let $X$ be a Banach space. If $(p_\lambda)_{\lambda\in \Lambda}$ is a family of continuous seminorms, then the subspace $Y=\{x\in X: \sup_{\lambda\in \Lambda}p_\lambda(x)<\infty\}$ is an operator range.
\end{lemma}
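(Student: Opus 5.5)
The plan is to realize $Y$ as the image of a bounded operator from a Banach space by renorming $Y$ itself. On $Y$ define
\[
\|x\|_Y := \|x\| + \sup_{\lambda\in\Lambda} p_\lambda(x), \qquad x\in Y.
\]
By the definition of $Y$ this is finite. It is clearly a norm on $Y$: the seminorm properties pass to the supremum, and $\|x\|_Y\geq \|x\|$ gives definiteness. Since $\|x\|_Y \ge \|x\|$, the inclusion $i:(Y,\|\cdot\|_Y)\to (X,\|\cdot\|)$ is a continuous linear operator whose image is exactly $Y$. So the whole statement reduces to showing that $(Y,\|\cdot\|_Y)$ is complete, in exact analogy with the graph-norm argument in the proof of Theorem \ref{main}.

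For completeness, take a $\|\cdot\|_Y$-Cauchy sequence $(x_n)_{n\in\mathbb{N}}$ in $Y$. It is Cauchy in $X$, so it converges in norm to some $x\in X$.

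For each fixed $\lambda$, continuity of $p_\lambda$ lets us pass to the limit in $m$ in $p_\lambda(x_n-x_m)\le \sup_\mu p_\mu(x_n-x_m)\le \varepsilon$ (valid for $n,m\ge N$). This gives $p_\lambda(x_n-x)\le\varepsilon$ for all $n\ge N$, uniformly in $\lambda$.

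Hence $\sup_\lambda p_\lambda(x)\le \sup_\lambda p_\lambda(x_N)+\varepsilon<\infty$, so $x\in Y$. The same uniform bound gives $\sup_\lambda p_\lambda(x_n-x)\le\varepsilon$ for $n\ge N$, which together with $\|x_n-x\|\to0$ yields $\|x_n-x\|_Y\to 0$.

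The only point needing care is the exchange of the limit in $m$ with the supremum over $\lambda$. This is handled by fixing $\lambda$ first, using continuity of the single seminorm $p_\lambda$, and only then taking the supremum; no uniform continuity of the family is needed. Once completeness is established, $Y=i(Y)$ is an operator range.
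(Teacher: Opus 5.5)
Your proposal is correct and follows essentially the same route as the paper: the same norm $\|x\|_Y=\|x\|+\sup_{\lambda\in\Lambda}p_\lambda(x)$, completeness obtained by fixing $\lambda$ and letting $m\to\infty$ in the Cauchy estimate, and continuity of the inclusion. The only cosmetic difference is that the paper shows the limit lies in $Y$ using the uniform bound $\|y_n\|_Y\le K$, whereas you get it from $p_\lambda(x)\le p_\lambda(x_N)+p_\lambda(x-x_N)$.
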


\begin{proof}
Evidently $Y$ is a subspace of $X$. Define in $Y$ the following norm
\[
\|x\|_Y=\|x\|+\sup_{\lambda\in \Lambda}p_\lambda(x).
\]
Let us see that $(Y,\|\cdot\|_Y)$ is complete. Let $(y_n)_{n\in\mathbb{N}}$ be a Cauchy sequence in $(Y,\|\cdot\|_Y)$. We then have that $(y_n)_{n\in\mathbb{N}}$ is a Cauchy sequence in $X$, since $\|x\|_y\geq \|x\|$ is valid for all $x\in Y$. Let $y\in X$ be the limit of $(y_n)_{n\in\mathbb{N}}$ in $X$. As $(\|y_n\|_Y)_{n\in\mathbb{N}}$ is bounded (since it is convergent), there is $K>0$ such that
\[
\|y_n\|+\sup_{\lambda\in \Lambda}p_\lambda(y_n)\leq K
\]
for all $n\in\mathbb{N}$. In particular, we have $p_\lambda(y_n)\leq K$ for all $n\in\mathbb{N}$, $\lambda\in \Lambda$. As each $p_\lambda$ is continuous and $y_n\longrightarrow y$ in $X$, we have that $p_\lambda(y)\leq K$ for all $\lambda\in \Lambda$, that is, $y\in Y$. Now, given $\epsilon>0$, there exists $N\in\mathbb{N}$ such that
\[
\|y_n-y_m\|_Y<\epsilon,\ \forall n,m\geq N.
\]
In particular, for each $\lambda\in \Lambda$ we have
\[
\|y_n-y_m\|+p_\lambda(y_n-y_m)<\epsilon,\ \forall n,m\geq N.
\]
Therefore, if $m\longrightarrow\infty$ in the last inequality, we have
\[
\|y_n-y\|+p_\lambda(y_n-y)\leq\epsilon
\]
for all $\lambda\in \Lambda$ and $n\geq N$, that is, $\|y_n-y\|_Y\leq \epsilon$ for $n\geq N$. Therefore $Y$ is complete with the norm $\|\cdot\|_Y$ and is the image of the inclusion $I:(Y,\|\cdot\|_Y)\longrightarrow (X,\|\cdot\|)$ which is continuous.
\end{proof}

From Theorem \ref{t2.9} we know that a Banach space has a separable quotient if and only if it has a proper and dense operator range. With this, we obtain a new equivalence for the SQP in terms of continuous seminorms.

\begin{theorem}
A Banach space $X$ has a separable quotient if and only if there exists a family of continuous seminorms $(p_\lambda)_{\lambda\in \Lambda}$ such that $Y=\{x\in X: \sup_{\lambda\in \Lambda}p_\lambda(x)<\infty\}$ is proper and dense.
\end{theorem}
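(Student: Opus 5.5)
The theorem follows by pairing Theorem \ref{t2.9} (a separable quotient exists iff there is a proper dense operator range) with Lemma \ref{t2.8} in one direction. In the other direction we need the converse fact that every operator range is the set of points where some family of continuous seminorms is bounded.

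\emph{Sufficiency.} Suppose a family $(p_\lambda)_{\lambda\in\Lambda}$ of continuous seminorms makes $Y=\{x\in X:\sup_\lambda p_\lambda(x)<\infty\}$ proper and dense. Lemma \ref{t2.8} shows that $Y$ is an operator range. It is therefore a proper dense operator range, and Theorem \ref{t2.9} gives a separable quotient.

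\emph{Necessity.} Suppose $X$ has a separable quotient. Theorem \ref{t2.9} gives a Banach space $Z$ and $T\in\mathcal{L}(Z,X)$ with $R=T(Z)$ proper and dense. The goal is to write $R$ as a bounded set of a seminorm family.

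First reduce to an injective operator. Let $\widehat{T}:Z/\ker T\to X$ be the induced map. It is injective, continuous, and has the same range. So we may assume $T$ is injective and transport the norm: $\|x\|_R:=\|T^{-1}x\|_Z$ for $x\in R$. Then $(R,\|\cdot\|_R)$ is a Banach space and $\|x\|\le\|T\|\,\|x\|_R$.

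Next I would take $B=T(B_Z)$, the image of the closed unit ball, and $\overline{B}$, its norm closure in $X$. The set $\overline{B}$ is closed, absolutely convex and bounded. For each $\varphi\in Z^*$ one might try to define seminorms on $X$ directly, but the cleaner choice is the family indexed by $\psi\in X^*$ with $\|T^*\psi\|\le 1$:
\[
p_\psi(x)=|\psi(x)|, \qquad \psi\in X^*,\ \|T^*\psi\|_{Z^*}\le 1 .
\]
By the bipolar theorem, $\sup_\psi p_\psi(x)$ is exactly the Minkowski gauge of $\overline{B}$. Hence
\[
Y=\{x:\sup_\psi p_\psi(x)<\infty\}=\bigcup_n n\overline{B}.
\]

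\textbf{Main obstacle.} We need $\bigcup_n n\overline{B}=R$, which amounts to showing $\overline{B}\subseteq R$. This can fail: for example, $\overline{B}$ may contain points of $X\setminus R$ when $Z$ is not reflexive. To repair it, I would use the standard trick of replacing $Z$ by $Z\times X$ and $T$ by $(z,x)\mapsto Tz+\varepsilon x$. That is useless here because it makes the range all of $X$. A better route is not to insist $Y=R$. It is enough that $Y$ is proper and dense. Density is clear, since $Y\supseteq R$ and $R$ is dense. So only properness must be checked, i.e.\ $\bigcup_n n\overline{B}\ne X$.

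If instead $\bigcup_n n\overline{B}=X$, the Baire category theorem gives that some $n\overline{B}$ has interior. Then $\overline{B}$ is a neighbourhood of $0$, so $\overline{T(B_Z)}$ contains a ball. The standard open-mapping iteration (the successive-approximation argument in the proof of the open mapping theorem, which uses completeness of $Z$) then gives that $T(B_Z)$ itself contains a ball. So $T$ is surjective, contradicting properness of $R$. Hence $Y$ is a proper dense subspace given by continuous seminorms, which completes the equivalence.
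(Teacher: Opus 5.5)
Your proof is correct. The sufficiency direction is exactly the paper's (Lemma \ref{t2.8} plus Theorem \ref{t2.9}), but for necessity you take a genuinely different route.

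\textbf{The paper's route.} It invokes Saxon--Wilansky (Theorem \ref{t1}(2)) to get a strictly increasing sequence of closed subspaces $Y_n$ with dense union. It then uses Baire to pick $y\notin\bigcup_n Y_n$ and takes the countable family $p_n(x)=\sum_{i\le n}\|\pi_{Y_i}(x)\|/\|\pi_{Y_i}(y)\|$. Density follows from $\bigcup_n Y_n\subseteq Y$, and properness from $p_n(y)=n$.

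\textbf{Your route.} You stay inside the operator-range picture. Starting from a proper dense range $T(Z)$, the seminorms $|\psi|$ with $\|T^*\psi\|\le 1$ have bounded set $\bigcup_n n\overline{T(B_Z)}$ by the bipolar theorem. It is worth saying explicitly that the bipolar gives the weak closure, which equals the norm closure because $T(B_Z)$ is absolutely convex. Density is inherited from $T(Z)\subseteq Y$. Properness follows from Baire, absolute convexity of $\overline{T(B_Z)}$, and the successive-approximation step of the open mapping theorem, which would force $T$ to be surjective.

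\textbf{What each buys.} Your argument avoids Theorem \ref{t1} altogether, so both directions rest only on Theorem \ref{t2.9}. It is canonical, working for any non-surjective operator with dense range. It also shows the seminorms can be taken to be moduli of continuous linear functionals. The paper's argument instead yields a countable, increasing family and an entirely elementary verification, with no duality or open-mapping step.

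Finally, your reduction to an injective $T$ with the transported norm is never used afterwards, nor is the digression about replacing $Z$ by $Z\times X$. Both can be deleted.
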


\begin{proof}
The converse follows immediately from Theorem \ref{t2.9} and Lemma \ref{t2.8}. Now, assuming that $X$ has a separable quotient, we obtain by Theorem \ref{t1} a strictly increasing sequence $Y_1\subset Y_2\subset\cdots \subset Y_n\subset\cdots$ of subspaces of $X$ whose union is dense in $X$. Consider now the sequence of seminorms $(\hat{p}_n)_{n\in\mathbb{N}}$ given by $\hat{p}_n(x)=\|\pi_{Y_n}(x)\|$. Since $\bigcup_{n\in\mathbb{N}}Y_n$ is a proper subspace of $X$ (thanks to Baire's Category Theorem), there exists $y\in X\setminus \bigcup_{n\in\mathbb{N}}Y_n$, whence we have $\hat{p}_n(y)>0$ for all $n\in\mathbb{N}$. Consider now the new sequence of continuous seminorms $(p_n)_{n\in\mathbb{N}}$ given by
\[
p_n(x)=\sum_{i=1}^{n}\frac{\hat{p}_i(x)}{\hat{p}_i(y)}.
\]
Let us now see that $\bigcup_{n\in\mathbb{N}}Y_n\subset Y=\{x\in X: \sup_{n\in \mathbb{N}}p_n(x)<\infty\}$ and $y\notin Y$ to conclude the result. In fact, given $x\in \bigcup_{n\in\mathbb{N}}Y_n$, there exists $N\in\mathbb{N}$ such that $x\in Y_N$. Thus $\hat{p}_m(x)= 0$ for all $m\geq N$, where we obtain $p_N(x)=p_n(x)$ for all $n\geq N$. Therefore $\sup_{n\in \mathbb{N}}p_n(x)<\infty$, and then $x\in Y$. Finally, since $p_n(y)=n$ for all $n\in\mathbb{N}$, we have that $y\notin Y$.
\end{proof}

As a direct consequence of Lemma \ref{t2.8}, we have the following result:

\begin{proposition}\label{c2.7}
Let $X$ be a Banach space. If $(p_n)_{n\in\mathbb{N}}\subset [1,\infty)$ is a non-decreasing sequence, and $(\alpha^{(n)})_{n\in\mathbb{N}}\subset[0,1]^{\mathbb{N}}$ and $(\varphi_n)_{n\in\mathbb{N}}\subset X^*\setminus\{0\}$ are such that for each $n\in\mathbb{N}$ holds
\begin{align*}
& \left\{x\in X: \sum_{k=1}^{\infty}\left| \left(\prod_{j=1}^{n}\alpha^{(j)}_{k}\right)\cdot\varphi_k(x)\right|^{p_n}<\infty\right\} \\
& \qquad \qquad \subsetneq\left\{x\in X: \sum_{k=1}^{\infty}\left| \left(\prod_{j=1}^{n+1}\alpha^{(j)}_{k}\right)\cdot\varphi_k(x)\right|^{p_{n+1}}<\infty\right\},
\end{align*}
then there exists a strictly increasing sequence of operator ranges $R_1\subset R_2\subset\ldots\subset R_n\subset\ldots$ with
\[
\left\{x\in X: \sum_{k=1}^{\infty}\left| \left(\prod_{j=1}^{n}\alpha^{(j)}_{k}\right)\cdot\varphi_k(x)\right|^{p_n}<\infty\right\}\subset R_n
\]
for all $n\in\mathbb{N}$.

In particular, if $X$ is a Banach space and $$Y=\left\{x\in X: \sum_{n=1}^{\infty}|\varphi_n(x)|^p<\infty\right\}$$ is proper, where $p\geq 1$ and $(\varphi_n)_{n\in\mathbb{N}}\subset X^*$ is a sequence of non-zero functionals, then for each $z\in X\setminus Y$ there exists an operator range $R_z$ such that $z\notin R_z$ and $Y\subset R_z$.
\end{proposition}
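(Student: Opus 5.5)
The plan is to realize each set in the chain as a set of the form appearing in Lemma \ref{t2.8}, that is, as the finiteness set of a supremum of continuous seminorms. Then define $R_n$ to be that set itself, so the claimed inclusion is in fact an equality.

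For each $n\in\mathbb{N}$ and each $N\in\mathbb{N}$, define
\[
q^{(n)}_N(x)=\Bigl(\sum_{k=1}^{N}\Bigl|\Bigl(\prod_{j=1}^{n}\alpha^{(j)}_{k}\Bigr)\varphi_k(x)\Bigr|^{p_n}\Bigr)^{1/p_n}.
\]
Since $p_n\ge 1$, this is the $\ell_{p_n}$-norm of a finite vector of continuous linear functionals evaluated at $x$. Hence $q^{(n)}_N$ is a continuous seminorm on $X$: the triangle inequality is Minkowski's inequality, and continuity follows from $q^{(n)}_N(x)\le \sum_{k\le N}\|\varphi_k\|\,\|x\|$, using $\alpha^{(j)}_k\in[0,1]$.

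The series of nonnegative terms converges exactly when its partial sums are bounded. Therefore the $n$-th set in the hypothesis equals $R_n:=\{x:\sup_N q^{(n)}_N(x)<\infty\}$. By Lemma \ref{t2.8}, each $R_n$ is an operator range. The hypothesis gives $R_n\subsetneq R_{n+1}$ directly, so the chain $(R_n)_{n\in\mathbb{N}}$ is strictly increasing. The only point needing care is that the family defining each $R_n$ consists of genuine continuous seminorms, which is where $p_n\ge1$ is used; I expect no real obstacle here.

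For the particular case, take the family $q_N(x)=\bigl(\sum_{k\le N}|\varphi_k(x)|^p\bigr)^{1/p}$. By the same reasoning and Lemma \ref{t2.8}, $Y$ is itself an operator range. Given $z\in X\setminus Y$, set $R_z:=Y$. Then $z\notin R_z$ and $Y\subset R_z$ trivially.

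If one prefers $R_z$ to be strictly larger than $Y$, replace it by $Y+\operatorname{span}\{w\}$ for some $w\notin Y\cup(\mathbb{K}z+Y)$, when such a $w$ exists. This set is again an operator range, being the image of $(Y,\|\cdot\|_Y)\oplus\mathbb{K}$ under $(y,\lambda)\mapsto y+\lambda w$. The statement does not require this, though, so I would present $R_z=Y$.
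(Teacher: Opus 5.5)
Your proposal is correct and follows the paper's approach: the paper presents this proposition as a direct consequence of Lemma \ref{t2.8}. As you do, one writes each set as the finiteness set of the supremum of the continuous partial-sum seminorms, so that it is itself an operator range, and in the particular case one may take $R_z=Y$.
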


Proposition \ref{c2.7} provides a condition on the Banach space $X$ that ensures the existence of an operator range in $X$, but it does not impose any condition guaranteeing that such a set or its closure does not coincide with $X$. In view of Theorem \ref{t2.9}, if, for instance, the subspace $Y = \left\{ x \in X : \sum_{n=1}^{\infty} |\varphi_n(x)|^p < \infty \right\}$ in the particular case of Proposition \ref{c2.7} is proper and dense, then it follows that $X$ has a separable quotient. The same conclusion holds in Propositions \ref{p2.11} and \ref{p8a} below.

Proposition \ref{c2.7} was obtained before Lemma \ref{t2.8}, using ideas from the well-known characterization of weakly compactly generated spaces due to Davis, Figiel, Johnson and Pe\l{}czy'nski \cite{Davis1974}. \textcolor{black}{It was only later} that we proved Lemma \ref{t2.8} and realized that Proposition \ref{c2.7} was a particular case of it. Nevertheless, we decided to keep Proposition \ref{c2.7} in the paper because, using the technique that was initially employed to prove Proposition \ref{c2.7} (ideas from the well-known characterization of weakly compactly generated spaces \cite{Davis1974}), we were able to derive additional results --variants of Proposition \ref{c2.7}-- that are not covered by Lemma \ref{t2.8}. We present one of these results here to preserve the core idea of the proof.

\begin{proposition}\label{p2.11}
Let $X$ be a Banach space, $p\in (1,\infty]$ and $(\varphi_n)_{n\in\mathbb{N}}\subset X^*$ a sequence of non-zero functionals. If
\[
Y=\bigcup\limits_{q\in [1,p)}\left\{x\in X: \sum_{n=1}^{\infty}|\varphi_n(x)|^q<\infty\right\}
\]
is a proper subspace of $X$, then for each $z\in X\setminus Y$ there exists an operator range $R_z$ such that $z\notin R_z$ and $Y\subset R_z$.
\end{proposition}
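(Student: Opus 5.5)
Fix $z\in X\setminus Y$. The plan is to build a single family of continuous seminorms $(\rho_m)_{m\in\mathbb{N}}$ such that $Y$ sits inside the set where $\sup_m\rho_m$ is finite, while $\sup_m\rho_m(z)=\infty$. Lemma \ref{t2.8} then says this set is an operator range, and it is the $R_z$ we want.

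First we record what $z\notin Y$ gives us. Write $Y_q=\{x\in X:\sum_n|\varphi_n(x)|^q<\infty\}$ for $q\in[1,p)$. These sets increase with $q$, since a sequence in $\ell_q$ is bounded and so lies in $\ell_{q'}$ for every $q'\ge q$. Hence $Y=\bigcup_{q<p}Y_q$ is an increasing union. Fix a sequence $q_k\uparrow p$ with $q_k\in[1,p)$. Then $z\notin Y$ means
\[
\sum_{n}|\varphi_n(z)|^{q_k}=\infty \quad\text{for every } k.
\]
So for every $k$ and every $N$ there is a finite block $F\subset\{N,N+1,\dots\}$ with $\sum_{n\in F}|\varphi_n(z)|^{q_k}\ge 1$.

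Next we choose the blocks. Inductively pick consecutive disjoint finite blocks $F_1<F_2<\cdots$ such that
\[
s_k:=\Bigl(\sum_{n\in F_k}|\varphi_n(z)|^{q_k}\Bigr)^{1/q_k}\ge 1 .
\]
Define the continuous seminorms
\[
\rho_m(x)=\sum_{k=1}^{m}\frac{1}{k^2}\,\frac{k^2\,\bigl(\sum_{n\in F_k}|\varphi_n(x)|^{q_k}\bigr)^{1/q_k}}{?}
\]
This first attempt does not work as written, and a cleaner choice is needed. Take $\rho_m(x)=\sup_{k\le m}c_k\bigl(\sum_{n\in F_k}|\varphi_n(x)|^{q_k}\bigr)^{1/q_k}$ with weights $c_k>0$ chosen so that $c_ks_k\to\infty$. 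We can arrange this by re-choosing each $F_k$ long enough that $s_k\ge k$, which is possible because the tail of $\sum_n|\varphi_n(z)|^{q_k}$ is infinite. Then take $c_k=1$. With this choice $\sup_m\rho_m(z)\ge\sup_k s_k=\infty$, so $z\notin R_z$.

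Now we check that $Y\subset R_z$. Let $x\in Y$, so $x\in Y_{q_j}$ for some $j$. For $k\ge j$ we have $q_k\ge q_j$, and because the $\ell_q$-norms decrease in $q$,
\[
\Bigl(\sum_{n\in F_k}|\varphi_n(x)|^{q_k}\Bigr)^{1/q_k}\le\Bigl(\sum_{n}|\varphi_n(x)|^{q_j}\Bigr)^{1/q_j}<\infty .
\]
The finitely many indices $k<j$ each give a finite number. Hence $\sup_m\rho_m(x)<\infty$, i.e.\ $x\in R_z$. The case $p=\infty$ is handled the same way with $q_k\uparrow\infty$.

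The main obstacle is the step that uses $z\notin Y$, namely choosing the blocks $F_k$ in a way that works for all exponents at once. This is where the union over $q<p$ makes things harder than in Proposition \ref{c2.7}. It is resolved by letting the exponent $q_k$ vary with the block. Monotonicity of $\ell_q$-norms then controls every $x\in Y$ uniformly on all but finitely many blocks, while divergence at each fixed exponent lets each block make $z$ as large as we like.
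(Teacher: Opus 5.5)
Your proof is correct, and it differs from the paper's in how the finite-block seminorms are combined into an operator range. Write $\sigma_k(x)$ for the $\ell_{q_k}$-norm of $(\varphi_n(x))_n$ over the chosen finite index set: your block $F_k$, or the paper's initial segment $\{1,\dots,n_k\}$. Both arguments share the same basic steps:
\begin{itemize}
\item fix $q_k\uparrow p$;
\item use the divergence of $\sum_n|\varphi_n(z)|^{q_k}$ to make $\sigma_k(z)$ large;
\item use $\|\cdot\|_{\ell_{q'}}\le\|\cdot\|_{\ell_q}$ for $q'\ge q$ to bound $\sigma_k(x)$, for $x\in Y_{q_r}$, at all stages $k\ge r$.
\end{itemize}

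The paper then follows the Davis--Figiel--Johnson--Pe\l czy\'nski pattern. It arranges $\sigma_k(z)\ge\sqrt k-\|z\|$, forms the equivalent norms $\|x\|_k=\frac1k(\|x\|+\sigma_k(x))$, and sets $R_z=\{x:\sum_k\|x\|_k^2<\infty\}$. This needs three things: the weights $1/k^2$ to get convergence on $Y$, the specific growth $\sqrt k$ to make $\sum_k 1/k$ diverge at $z$, and the completeness of $(R_z,|||\cdot|||)$, which is standard but only asserted there.

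You take instead $R_z=\{x:\sup_k\sigma_k(x)<\infty\}$. No weights are needed, because monotonicity already gives a uniform bound on $Y$. Any unbounded growth, such as $\sigma_k(z)\ge k$, suffices at $z$. Completeness is supplied by Lemma \ref{t2.8}.

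Your route is shorter, and it shows that this proposition is in fact a special case of Lemma \ref{t2.8}. This goes against the paper's remark that such variants are not covered by it; indeed, even the paper's $R_z$ equals $\{x:\sup_m(\sum_{k\le m}\|x\|_k^2)^{1/2}<\infty\}$. The paper's route buys an explicit $\ell_2$-sum norm on $R_z$, which is the DFJP technique the authors wanted to showcase.

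Three small points about your write-up:
\begin{itemize}
\item Delete the abandoned first formula for $\rho_m$ (the display with the question mark).
\item The blocks $F_k$ need not be disjoint.
\item Weights $c_k$, if you kept them, would have to be bounded to preserve the estimate on $Y$; with $c_k=1$ this is automatic.
\end{itemize}
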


\begin{proof}
Let $(q_n)_{n\in\mathbb{N}}$ be a strictly increasing sequence in $[1,p)$ such that $q_n\rightarrow p$. Since
\[
\left\{x\in X: \sum_{n=1}^{\infty}|\varphi_n(x)|^r<\infty\right\}\subset\left\{x\in X: \sum_{n=1}^{\infty}|\varphi_n(x)|^q<\infty\right\}
\]
for $r<q$, we have $Y=\bigcup\limits_{n\in\mathbb{N}}\left\{x\in X: \sum_{i=1}^{\infty}|\varphi_i(x)|^{q_n}<\infty\right\}$. Since $z\notin Y$, it follows that $\sum_{i=1}^{\infty}|\varphi_i(x)|^{q_n}=\infty$ for all $n\in\mathbb{N}$, and so for every $k\in\mathbb{N}$ we can take $n_k\in \mathbb{N}$ satisfying
\[
\sum_{i=1}^{n_k}|\varphi_i(z)|^{q_k}\geq (\sqrt{k}-\|z\|)^{q_k}.
\]
Define in $X$ the norm
\[
\|x\|_k=\frac{1}{k}\left(\|x\|+\left(\sum_{i=1}^{n_k}|\varphi_i(x)|^{q_k}\right)^\frac{1}{q_k}\right),
\]
which is clearly equivalent to the original norm of $X$, since
\[
\frac{1}{k}\|x\|\leq \|x\|_k\leq \frac{1+\left(\sum_{i=1}^{n_k}\|\varphi_i\|^{q_k}\right)^\frac{1}{q_k}}{k}\|x\|.
\]

If we consider $R_z=\{x\in X: \sum_{k=1}^{\infty}\|x\|_k^2<\infty\}$, we have that $R_z$ is a Banach space if equipped with the norm
\[
|||x|||=\sqrt{\sum_{k=1}^{\infty}\|x\|_k^2}
\]
and the inclusion of $R_z$ in $X$ is continuous. Given $y\in Y$, we have that $y\in\{x\in X: \sum_{n=1}^{\infty}|\varphi_n(x)|^{q_r}<\infty\}$ for some $r\in \mathbb{N}$ and $$\left(\sum_{n=1}^{\infty}|\varphi_n(y)|^{q_s}\right)^\frac{1}{q_s}\leq\left(\sum_{n=1}^{\infty}|\varphi_n(y)|^{q_r}\right)^\frac{1}{q_r}$$ for all $s\geq r$. So
\begin{align*}
&\sum_{k=1}^{\infty}\|y\|_k^2\\
&=\sum_{k=1}^{r-1}\frac{1}{k^2}\left(\|y\|+\left(\sum_{i=1}^{n_k}|\varphi_i(y)|^{q_k}\right)^\frac{1}{q _k}\right)^2+\sum_{k=r}^{\infty}\frac{1}{k^2}\left(\|y\|+\left(\sum_{i=1}^{n_k}|\varphi_i(y)|^{q_k}\right)^\frac{1}{q_k}\right)^2\\
&\leq\sum_{k=1}^{r-1}\frac{1}{k^2}\left(\|y\|+\left(\sum_{i=1}^{n_k}|\varphi_i(y)|^{q_k}\right)^\frac{1}{q_k}\right)^2 +\sum_{k=r}^{\infty}\frac{1}{k^2}\left(\|y\|+\left(\sum_{i=1}^{n_k}|\varphi_i(y)|^{q_r}\right)^\frac{1}{q_r}\right)^2\\
&\leq\sum_{k=1}^{r-1}\frac{1}{k^2}\left(\|y\|+\left(\sum_{i=1}^{n_k}|\varphi_i(y)|^{q_k}\right)^\frac{1}{q_k}\right)^2+ \left(\|y\|+\left(\sum_{i=1}^{\infty}|\varphi_i(y)|^{q_r}\right)^\frac{1}{q_r}\right)^2\cdot\left(\sum_{k=r}^{\infty}\frac{1}{k^2}\right)\\
&<\infty,
\end{align*}
that is, $y\in R_z$. On the other hand,
\begin{align*}
\sum_{k=1}^{\infty}\|z\|_k^2 & =\sum_{k=1}^{\infty}\frac{1}{k^2}\left(\|z\|+\left(\sum_{i=1}^{n_k}|\varphi_i(z)|^{q_k}\right)^\frac{1}{q_k}\right)^2 \\
& \geq \sum_{k=1}^{\infty}\frac{1}{k^2}\left(\|z\|+\sqrt{k}-\|z\|\right)^2=\sum_{k=1}^{\infty}\frac{1}{k}=\infty.
\end{align*}
\end{proof}

Using the ideas from the proof of the proposition above, we obtain a variant of Proposition \ref{c2.7}. Proposition \ref{p8a} below generalizes Proposition \ref{p2.11}.

\begin{proposition}\label{p8a}
Let $X$ be a Banach space. Let $(p_n)_{n\in\mathbb{N}}\subset [1,\infty)$ be a non-decreasing sequence, $(\alpha^{(n)})_{n\in\mathbb{N}}\subset[0,1]^{\mathbb{N}}$, and $(\varphi_n)_{n\in\mathbb{N}}\subset X^*\setminus\{0\}$. For each $n\in\mathbb{N}$, define
\[
Y_n:=\left\{x\in X: \sum_{k=1}^{\infty}\left| \left(\prod_{j=1}^{n}\alpha^{(j)}_{k}\right)\cdot\varphi_k(x)\right|^{p_n}<\infty\right\}.
\]
If the subspace $\bigcup_{n\in\mathbb{N}}Y_n$ is proper, then for each $z\in X\setminus \bigcup_{n\in\mathbb{N}}Y_n$, there exists an operator range $R_z$ such that $z\notin R_z$ and $\bigcup_{n\in\mathbb{N}}Y_n\subset R_z$.  
\end{proposition}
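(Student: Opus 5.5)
The plan mirrors the proof of Proposition \ref{p2.11}: build a sequence of equivalent norms on $X$, each dominating a truncated version of the $n$-th defining expression, and glue them with an $\ell_2$-sum. The resulting space is $R_z$.

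\textbf{Step 1 (monotonicity of the $Y_n$).} For $x\in Y_n$ the terms $c^{(n)}_k(x):=\bigl|\bigl(\prod_{j=1}^{n}\alpha^{(j)}_k\bigr)\varphi_k(x)\bigr|$ are summable in the $p_n$-th power, so they tend to $0$. Since $\alpha^{(n+1)}_k\in[0,1]$, we have $c^{(n+1)}_k(x)\le c^{(n)}_k(x)$. Moreover $p_{n+1}\ge p_n$ and $\ell_{p_n}\subset\ell_{p_{n+1}}$ with $\|\cdot\|_{p_{n+1}}\le\|\cdot\|_{p_n}$. Together these give $Y_n\subset Y_{n+1}$ and, for $x\in Y_r$ and $s\ge r$,
\[
\Bigl(\sum_{k}c^{(s)}_k(x)^{p_s}\Bigr)^{1/p_s}\le\Bigl(\sum_k c^{(r)}_k(x)^{p_r}\Bigr)^{1/p_r}=:M_r(x)<\infty .
\]

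\textbf{Step 2 (choice of truncations).} Fix $z\notin\bigcup_n Y_n$. Then $\sum_k c^{(n)}_k(z)^{p_n}=\infty$ for every $n$. Hence for each $n$ there is $N_n$ with
\[
\Bigl(\sum_{k=1}^{N_n}c^{(n)}_k(z)^{p_n}\Bigr)^{1/p_n}\ge\sqrt n .
\]

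\textbf{Step 3 (the norms and the space $R_z$).} For each $n$ define
\[
\|x\|_n=\frac1n\Bigl(\|x\|+\Bigl(\sum_{k=1}^{N_n}c^{(n)}_k(x)^{p_n}\Bigr)^{1/p_n}\Bigr).
\]
This is a norm equivalent to $\|\cdot\|$, because it is a finite sum of continuous seminorms plus $\|\cdot\|$. Set
\[
R_z=\Bigl\{x:\ \sum_n\|x\|_n^2<\infty\Bigr\},\qquad |||x|||=\Bigl(\sum_n\|x\|_n^2\Bigr)^{1/2}.
\]
Completeness of $(R_z,|||\cdot|||)$ and continuity of the inclusion into $X$ (from $\|x\|\le |||x|||$) follow exactly as in Lemma \ref{t2.8} or Proposition \ref{p2.11}. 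So $R_z$ is an operator range.

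\textbf{Step 4 (inclusion and exclusion).} Let $y\in Y_r$. For $n\ge r$, Step 1 gives $\|y\|_n\le\frac1n(\|y\|+M_r(y))$. The finitely many terms with $n<r$ are finite, so $\sum_n\|y\|_n^2\le C+(\|y\|+M_r(y))^2\sum_n n^{-2}<\infty$, and $y\in R_z$. For $z$ itself, Step 2 gives $\|z\|_n\ge \frac{1}{\sqrt n}$, so $\sum_n\|z\|_n^2\ge\sum_n\frac1n=\infty$ and $z\notin R_z$.

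The main obstacle is the monotonicity in Step 1: the bound on $y\in Y_r$ must hold uniformly in $n\ge r$, despite both the exponent and the weights changing with $n$. This is exactly why the hypotheses $\alpha^{(j)}_k\in[0,1]$ and $p_n$ non-decreasing are needed. It also requires the $\ell_p$-norm inclusion to be applied to the already weight-reduced sequence.
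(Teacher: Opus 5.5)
Your proof is correct, and it is essentially the paper's route: the paper gives no separate proof of this statement and only says to adapt the proof of Proposition \ref{p2.11}, which is what you do, with the monotonicity $\|c^{(s)}(x)\|_{p_s}\le\|c^{(r)}(x)\|_{p_r}$ for $s\ge r$ (from $\alpha^{(j)}_k\in[0,1]$ and the contractive inclusion $\ell_p\subset\ell_q$ for $p\le q$) as the one new ingredient. Your normalization $\bigl(\sum_{k\le N_n}c^{(n)}_k(z)^{p_n}\bigr)^{1/p_n}\ge\sqrt n$ is slightly cleaner than the paper's $(\sqrt k-\|z\|)^{q_k}$, and completeness of $R_z$ follows directly from Lemma \ref{t2.8} applied to the continuous seminorms $x\mapsto\bigl(\sum_{n\in F}\|x\|_n^2\bigr)^{1/2}$ with $F$ finite.
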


As previously mentioned, in order for Propositions \ref{c2.7}, \ref{p2.11}, and \ref{p8a} to provide sufficient conditions ensuring the existence of a separable quotient in the Banach space under consideration, it is necessary to assume that the operator ranges appearing in the statements of these propositions are proper and dense. Below, we present additional conditions on the Banach space $X$ that allow us to guarantee the existence of separable quotients in $X$ even when the operator range 
\[
Y = \left\{ x \in X : \sum_{n=1}^{\infty} |\varphi_n(x)|^p < \infty \right\}
\]
coincides with the entire space $X$.

\begin{theorem}
Let $(X,\|\cdot\|)$ and $(Y,\|\cdot\|_Y)$ be Banach spaces with $Y\subset\mathbb{K}^\mathbb{N}$, and $Y$ having the canonical sequence $(e_n)_{n\in\mathbb{N}}$ as Schauder basis. Let $(\varphi_n)_{n\in\mathbb{N}}$ be a linearly independent sequence in $X^*$. If $(\varphi_n(x))_{n\in\mathbb{N}}\in Y$ for all $x\in X$ and $$\sup_{(\varphi_n(x))_{n\in\mathbb{N}}\in S_Y}\left[\inf_{y\in \bigcap_{n\in\mathbb{N}}\mathrm{ker}(\varphi_n)}\|x-y\|\right]<\infty,$$ then $X$ has quotient isomorphic to a subspace of $Y$.
\end{theorem}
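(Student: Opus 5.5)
The plan is to realize the quotient explicitly through the coefficient map
\[
S: X \longrightarrow Y,\qquad S(x)=(\varphi_n(x))_{n\in\mathbb{N}},
\]
and to show that $S$ induces an isomorphism from $X/Z$, where $Z:=\bigcap_{n\in\mathbb{N}}\ker(\varphi_n)$, onto a closed subspace of $Y$. Linearity of $S$ is clear, and $S$ is well defined by hypothesis. Continuity comes from the Closed Graph Theorem. Suppose $x_k\to x$ in $X$ and $S(x_k)\to w$ in $Y$. Since $(e_n)$ is a Schauder basis of $Y$, the coordinate functionals $e_n^*$ on $Y$ are continuous. Hence
\[
w_n=\lim_k e_n^*(S(x_k))=\lim_k \varphi_n(x_k)=\varphi_n(x)
\]
for every $n$, so $w=S(x)$. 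Thus $S\in\mathcal{L}(X,Y)$.

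Next, $\ker S=Z$ is closed, so $S$ factors as $S=\hat S\circ\pi$, where $\pi:X\to X/Z$ is the quotient map and $\hat S:X/Z\to Y$ is an injective bounded operator with $\|\hat S\|\le \|S\|$. The remaining task is to show that $\hat S$ is bounded below. In that case $\hat S$ is an isomorphism onto its range $\hat S(X/Z)$. That range is then complete, hence closed in $Y$, and $X/Z$ is isomorphic to a subspace of $Y$.

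The key step is to reinterpret the displayed hypothesis as this lower bound. For $x\in X$ we have $\inf_{y\in Z}\|x-y\|=\|\pi(x)\|_{X/Z}$. So the hypothesis says there is $C<\infty$ with
\[
\|\pi(x)\|\le C \quad\text{whenever } \|S(x)\|_Y=1 .
\]
Take an arbitrary $x\notin Z$. Then $S(x)\neq 0$, and $x/\|S(x)\|_Y$ satisfies $\|S(x/\|S(x)\|_Y)\|_Y=1$. By homogeneity of the quotient norm,
\[
\|\pi(x)\|\le C\|S(x)\|_Y=C\|\hat S(\pi(x))\|_Y .
\]
For $x\in Z$ both sides vanish, so $\|\hat S(\xi)\|\ge C^{-1}\|\xi\|$ holds for all $\xi\in X/Z$, which is the required bound.

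The main obstacle is making sure the conclusion is not vacuous. One might want the quotient to be infinite-dimensional, and one must also deal with a possibly empty supremum. Linear independence of $(\varphi_n)$ handles both. For each $N$ the map $x\mapsto(\varphi_1(x),\dots,\varphi_N(x))$ is surjective onto $\mathbb{K}^N$, so $S(X)$ contains vectors with arbitrarily long independent initial segments. Hence $S(X)$, and so $X/Z$, is infinite-dimensional. In particular the set $\{x : S(x)\in S_Y\}$ is nonempty, and $C$ is a genuine finite constant. I would also note that the basis hypothesis on $Y$ is used only to guarantee continuity of the coordinate functionals, which the closed-graph step needs.
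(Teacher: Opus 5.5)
Your proof is correct and follows essentially the same route as the paper: the coefficient map is bounded by the Closed Graph Theorem and factors through $X/\bigcap_{n}\ker(\varphi_n)$, and the displayed hypothesis is exactly the reverse inequality that makes the induced map an isomorphism onto its closed range. The paper phrases this as equivalence of the quotient norm with $\|\overline{x}\|_W=\|(\overline{\varphi}_n(\overline{x}))_{n}\|_Y$, which is your bounded-below estimate, while your explicit checks of the closed graph via the coordinate functionals and of infinite-dimensionality via linear independence fill in details the paper leaves implicit.
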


\begin{proof}
Since the map $T:X\longrightarrow Y$ given by $T(x)=(\varphi_n(x))_{n\in\mathbb{N}}$ is well defined and has a closed graph, it follows that there exists $C>0$ such that
\[
\|(\varphi_n(x))_{n\in\mathbb{N}}\|_Y\leq C\|x\|,
\]
for all $x\in X$. Consider now the quotient space $\displaystyle W:=\frac{X}{\bigcap\limits_{n\in\mathbb{N}}\mathrm{ker}(\varphi_n)}$. For $\overline{x}\in W$ we define $\overline{\varphi}_n(\overline{x}):=\varphi_n(x)$ and this expression does not depend on the representative of the \textcolor{black}{equivalence} class $\overline{x}$. As for $y\in \bigcap\limits_{n\in\mathbb{N}}\mathrm{ker}(\varphi_n)$ we have
\[
|\overline{\varphi}_n(\overline{x})|=|\varphi_n(x)|=|\varphi_n(x-y)|\leq\|\varphi_n\|\cdot\|x-y\|,
\]
it follows that $\overline{\varphi}_n\in W^*$ with $\|\overline{\varphi}_n\|\leq\|\varphi_n\|$. It can be easily verified that the sequence $(\overline{\varphi}_n)_{n\in\mathbb{N}}$ in $W^*$ is linearly independent with $\bigcap\limits_{n\in\mathbb{N}}\mathrm{ker}(\overline{\varphi}_n)=\{0\}$, that is, $W^*=\overline{\mathrm{span}\{\overline{\varphi}_n:\ n\in\mathbb{N}\}}^{\mathrm{w}^*}$. Clearly we also have
\[
\|(\overline{\varphi}_n(\overline{x}))_{n\in\mathbb{N}}\|_Y\leq C\|\overline{x}\|,
\]
for all $\overline{x}\in W$. Since the sequence $(\overline{\varphi}_n)_{n\in\mathbb{N}}$ separates the points of $W$, it follows that
\[
\|(\overline{\varphi}_n(\overline{x}))_{n\in\mathbb{N}}\|_Y =0\Leftrightarrow \overline{x}=0.
\]
Hence, the expression $\|\overline{x}\|_W:=\|(\overline{\varphi}_n(\overline{x}))_{n\in\mathbb{N}}\|_Y$ defines a norm on $W$. Evidently the set $\{\overline{x}\in W:\ \|\overline{x}\|_W= 1\}$ is bounded in $W$ with the quotient norm, that is, there exists $K>0$ such that, if $\|\overline{x}\|_W= 1$, then $\|\overline{x}\|\leq K$. Thus, for $\overline{x}\neq 0$ we have
\[
\left\|\frac{\overline{x}}{\|\overline{x}\|_W}\right\|\leq K,
\]
whence we infer that $\|\overline{x}\|\leq K \|\overline{x}\|_W$. Since the norms $\|\cdot\|$ and $\|\cdot\|_W$ are equivalent in $W$, it follows that $(W,\|\cdot\|_W)$ is complete. Now, since the map $\overline{T}:(W,\|\cdot\|_W)\longrightarrow Y$, $\overline{T}(\overline{x}):=(\overline{\varphi}_n(\overline{x}))_{n\in\mathbb{N}}$, is an isometry, we conclude that $(W,\|\cdot\|)$ is isomorphic to a subspace of $Y$.
\end{proof}

\bigskip

\noindent \textbf{Acknowledgments.} The authors would like to thank Prof. Per H. Enflo for fruitful conversations and remarks on this paper.

\end{document}